\newcounter{iii}
\newcommand{\bb}{{\mathcal B}}
\newcommand{\aaa}{{\mathcal A}}
\newcommand{\mm}{\mathcal M}
\newcommand{\ff}{\mathcal F}
\theoremstyle{plain}
\newtheorem{thm}{Theorem}
\newtheorem{lem}[thm]{Lemma}
\newtheorem{prop}[thm]{Proposition}
\newtheorem{cor}[thm]{Corollary}
\theoremstyle{definition}
\numberwithin{equation}{section}
\numberwithin{thm}{section}
\title{Structure of non-trivial intersecting families}
\author{Andrey Kupavskii}
\address{Moscow Institute of Physics and Technology; Email: {\tt kupavskii@ya.ru}.} 
\date{}
\begin{document}
\maketitle
\begin{abstract} We say that a family of $k$-subsets of an $n$-element set is {\it intersecting}, if any two of its sets intersect. In this paper, we study the structure of large intersecting families. Several years ago, Han and Kohayakawa (Proc. AMS, 2017), and then Kostochka and Mubayi (Proc. AMS, 2017) obtained certain structural results concerning large intersecting families. In this paper, we extend and generalize their results, giving them a conclusive form.
\end{abstract}

\section{Introduction}

For positive integer $n$ we use the standard notation $[n]=\{1,\ldots, n\}$ and, more generally,  $[a,b]:=\{a,a+1,\ldots, b\}$ for integers $a, b$, with the convention $[a,b] = \emptyset$ for $a>b$. For a set $X$, denote by $2^{X}$ its power set and, for an integer $k\ge 0$,  denote by ${X\choose k}$ the collection of all $k$-element subsets ({\it $k$-sets}) of $X$. A {\it family} is a collection of sets.  We call a family {\it intersecting} if any two of its sets intersect. An intersecting family is {\it trivial}, or a {\it star} if all its sets contain a given element. Otherwise, it is called {\it non-trivial}. The family of all $k$-sets containing a given element is called a {\it full star}.

One of the oldest and most famous results in extremal combinatorics is the Erd\H os--Ko--Rado theorem \cite{EKR}, which states that for $n\ge 2k$ the largest intersecting families in ${[n]\choose k}$ are full stars.  Hilton and Milner \cite{HM} found the size and structure of the largest non-trivial intersecting families of $k$-sets for $n>2k$. They have size ${n-1\choose k-1}-{n-k-1\choose k-1}+1$ and for $k\ge 5$ have an element that intersects all but one set. That is, they have the form $\mathcal H_{k}$, where for integer $2\le u\le k$
\begin{equation}\label{eqhu}\mathcal H_u:=\ \Big\{A\in {[n]\choose k}\ :\ [2,u+1]\subset A\Big\}\cup\Big\{A\in{[n]\choose k}\ :\  1\in A, [2,u+1]\cap A\ne \emptyset\Big\}.\end{equation}
We remark that $|\mathcal H_3| = |\mathcal H_2|$.

For any set $X$, family $\ff\subset 2^X$ and $i\in X$, we use the following standard notation
\begin{align*}
\ff(\bar i):=&\ \{F\ :\ i\notin F, F\in\ff\},\\
\ff(i):=&\ \{F\setminus\{i\}\ :\ i\in F, F\in\ff\}.
\end{align*}
For a family $\ff\subset 2^{[n]}$ and $i\in [n]$, the {\it diversity} $\gamma(\ff)$ of $\ff$ is $\min_{i\in [n]}|\ff(\bar i)|$. The following theorem due to Zakharov and the author is a slight strengthening of a result due to Frankl \cite{Fra1}. It provides a far-reaching generalization of the Hilton--Milner theorem.
\begin{thm}[Kupavskii and Zakharov \cite{KZ}]\label{thm1} Let $n>2k>0$ and $\ff\subset {[n]\choose k}$ be an intersecting family. If $\gamma(\ff)\ge {n-u-1\choose n-k-1}$ for some real $3\le u\le k$, then \begin{equation}\label{eq01}|\ff|\le {n-1\choose k-1}+{n-u-1\choose n-k-1}-{n-u-1\choose k-1}.\end{equation}
\end{thm}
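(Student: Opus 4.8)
The plan is to reduce the statement to a question about cross-intersecting families and then to attack it with the Kruskal--Katona theorem; the delicate point will be that the intersecting property must be used \emph{on top of} Kruskal--Katona, not just through it. First I would pass to a vertex of maximum degree: relabelling, assume $1$ has maximum degree in $\ff$, so that $|\ff(\bar 1)|=\gamma(\ff)=:\gamma$ and $|\ff|=\gamma+|\ff(1)|$. Put $m:=n-1$, $\aaa:=\ff(\bar 1)\subset\binom{[2,n]}{k}$ and $\bb:=\ff(1)\subset\binom{[2,n]}{k-1}$. Since $\ff$ is intersecting, $\aaa$ is intersecting and $(\aaa,\bb)$ is cross-intersecting. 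The surviving hypothesis is $|\aaa|=\gamma\ge\binom{n-u-1}{n-k-1}=\binom{m-u}{k-u}$, a condition only on $|\aaa|$; as the shifts $S_{ij}$ preserve all cardinalities, cross-intersectedness, and the intersectedness of $\aaa$, I may also assume $\aaa$ and $\bb$ shifted. So it suffices to bound $|\aaa|+|\bb|$ when $\aaa\subset\binom{[m]}{k}$ is shifted and intersecting, $\bb\subset\binom{[m]}{k-1}$ is shifted and cross-intersecting with $\aaa$, $m\ge 2k$, and $|\aaa|\ge\binom{m-u}{k-u}$ for real $3\le u\le k$.

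Next I would recast the cross-intersecting condition as a shadow condition. With $\overline{\aaa}:=\{[m]\setminus A:A\in\aaa\}$, a family of $(m-k)$-sets, a $(k-1)$-set $B$ meets every member of $\aaa$ precisely when $B$ lies in no member of $\overline{\aaa}$; hence $\bb$ is disjoint from the $(k-1)$-st lower shadow $\partial^{(k-1)}\overline{\aaa}$, and
\[|\ff|=|\aaa|+|\bb|\ \le\ |\aaa|+\binom{m}{k-1}-\big|\partial^{(k-1)}\overline{\aaa}\big|.\]
Everything then hinges on lower-bounding $\big|\partial^{(k-1)}\overline{\aaa}\big|$ in terms of $|\aaa|$. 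Writing $|\aaa|=|\overline{\aaa}|=\binom{x}{m-k}$ for real $x\ge m-k$, the Lov\'asz form of Kruskal--Katona gives $\big|\partial^{(k-1)}\overline{\aaa}\big|\ge\binom{x}{k-1}$. When $\gamma$ equals the threshold $\binom{m-u}{k-u}$ we have $x=m-u$, and this already yields $|\ff|\le\binom{m}{k-1}+\binom{m-u}{k-u}-\binom{m-u}{k-1}$, that is, \eqref{eq01}.

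The main obstacle is the case $\gamma$ \emph{strictly larger} than $\binom{m-u}{k-u}$, where plain Kruskal--Katona is too weak: the map $x\mapsto\binom{x}{m-k}-\binom{x}{k-1}$ is increasing on the relevant range, so the resulting bound drifts away from \eqref{eq01}, and the intersectedness of $\aaa$ (unused so far) has to be brought in. This is the heart of the argument. Since $\aaa$ is intersecting, no member of $\aaa$ is contained in a member of $\overline{\aaa}$, i.e.\ $\overline{\aaa}$ is disjoint from the $k$-th lower shadow of $\aaa$; being shifted as well, $\aaa$, and hence $\overline{\aaa}$, has explicit structure (for instance $\{2,4,\dots,2k\}\notin\aaa$, hence also the disjoint $\{1,3,\dots,2k-1\}\notin\aaa$). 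I would use this to force $\overline{\aaa}$ away from every Kruskal--Katona extremal family, so that $\big|\partial^{(k-1)}\overline{\aaa}\big|$ strictly exceeds $\binom{x}{k-1}$ by the right amount — most naturally via an auxiliary biregular bipartite graph between $\overline{\aaa}$ and an appropriate family of $(k-1)$-sets, counting edges after deleting the part forbidden by intersectedness (the regular-bipartite-graph method of Kupavskii and Zakharov), or by a direct induction on the shifted structure. This should give $|\ff|\le\binom{m}{k-1}+\binom{m-u_0}{k-u_0}-\binom{m-u_0}{k-1}$, where $u_0$ is defined by $\binom{m-u_0}{k-u_0}=\gamma$.

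Finally I would transfer this to the given $u$. From $\gamma\ge\binom{m-u}{k-u}$ and the monotonicity of $t\mapsto\binom{m-t}{k-t}$ we get $u_0\le u$, and one checks by a routine binomial computation that $u\mapsto\binom{m}{k-1}+\binom{m-u}{k-u}-\binom{m-u}{k-1}$ is non-decreasing on $[3,k]$; together with the identity $\binom{m-2}{k-2}-\binom{m-2}{k-1}=\binom{m-3}{k-3}-\binom{m-3}{k-1}$ (so the values at $u=2$ and $u=3$ coincide) and the diversity bound $\gamma(\ff)\le\binom{n-3}{k-2}$ (which keeps $u_0\ge 2$), this shows the bound obtained at $u_0$ is at most the right-hand side of \eqref{eq01}. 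For non-integer $u$ nothing changes, since only the analytic expressions of the binomial coefficients are used throughout.
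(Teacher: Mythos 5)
The paper does not prove Theorem~\ref{thm1}; it quotes it from \cite{KZ}. So there is no in-paper proof to compare with, and your proposal has to be judged on its own terms against what is known about the \cite{KZ} argument. Your framing --- pass to $\aaa=\ff(\bar 1)$, $\bb=\ff(1)$, shift inside $[2,n]$, encode cross-intersection as ``$\bb$ avoids $\partial^{(k-1)}\overline{\aaa}$'', and invoke Kruskal--Katona in Lov\'asz form --- is the right scaffolding and broadly matches the framework in \cite{KZ}. But there are two concrete gaps, and the second is fatal.

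First, the step you yourself flag as ``the heart of the argument'' is not actually carried out, and on inspection it accomplishes nothing. You observe that plain Kruskal--Katona gives $|\ff|\le\binom{m}{k-1}+\binom{x}{m-k}-\binom{x}{k-1}$ with $x$ defined by $|\aaa|=\binom{x}{m-k}$, and that this ``drifts away'' from \eqref{eq01} when $\gamma$ exceeds the threshold, so that the intersectedness of $\aaa$ must be used. You then assert that a bipartite-switching argument ``should give'' $|\ff|\le\binom{m}{k-1}+\binom{m-u_0}{k-u_0}-\binom{m-u_0}{k-1}$ with $u_0$ defined by $\binom{m-u_0}{k-u_0}=\gamma$. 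But writing $x=m-u_0$ shows this ``improved'' bound is identically the Kruskal--Katona bound you already had --- you have improved nothing, and the intersectedness of $\aaa$ plays no role anywhere in the sketch. The real content of \cite{KZ} (a biregular bipartite switching that produces a genuine gain over the shadow bound when $\aaa$ fails to be extremal) is not present.

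Second, to close the argument you invoke $\gamma(\ff)\le\binom{n-3}{k-2}$ ``to keep $u_0\ge 2$.'' That inequality is Frankl's diversity conjecture; it is a hard theorem in its own right, it is \emph{not available} in the full range $n>2k$ assumed in Theorem~\ref{thm1} (it fails when $n$ is close to $2k$), and resolving it requires tools at least as strong as the theorem you are trying to prove. Without it, and with only the trivial EKR bound $\gamma\le\binom{m-1}{k-1}$ (i.e.\ $u_0\ge 1$), your monotonicity step does not go through: the function $u\mapsto\binom{m-u}{m-k}-\binom{m-u}{k-1}$ has a minimum near $u=2$, so for $1\le u_0<2$ the Kruskal--Katona bound at $u_0$ strictly exceeds the right-hand side of \eqref{eq01} at $u\ge 3$. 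This is precisely the range where the intersectedness of $\aaa$ must enter to reduce the shadow estimate, and where the sketch supplies no argument. In short: the reduction and the easy direction are fine, but the proposal does not contain the mechanism that makes Theorem~\ref{thm1} true for large $\gamma$, and the external fact it leans on instead is both unavailable and essentially circular.
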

This theorem implies the Hilton--Milner theorem if one substitutes $u=k$. It is sharp for integer values of $u$, as is witnessed by the example \eqref{eqhu}. On a high level, it provides us with an essentially sharp upper bound on $|\ff|$ in terms of its distance to the closest trivial family.

Numerous authors aimed to determine precisely, what are the largest intersecting families in ${[n]\choose k}$ with certain restrictions. One of such questions was studied by Han and Kohayakawa \cite{HK}, who determined the largest family of intersecting families that is neither contained in the Erd\H os--Ko--Rado family, nor in the Hilton--Milner family. In our terms, the question can be restated as follows: what is the largest intersecting family with $\gamma(\ff)\ge 2$? For $i\in[k]$ let us put $I_i:=[i+1,k+i]$ and
$$\mathcal J_i:=\ \{I_1,I_i\}\cup \Big\{F\in{[n]\choose k}\ :\ 1\in F, F\cap I_1\ne \emptyset, F\cap I_i\ne \emptyset\Big\}.$$
We note that $\mathcal J_i\subset{[n]\choose k}$ and that $\mathcal J_i$ is intersecting for every $i\in [k]$. Moreover, $\gamma(\mathcal J_i) = 2$ for $i>1$ and $\mathcal J_1$ is the Hilton--Milner family. It is an easy calculation to see that $|\mathcal J_i|\ge |\mathcal J_{i+1}|$ for every $k\ge 4$ and $i\in [k-1]$. More precisely, $|\mathcal J_1|> |\mathcal J_{2}|$ if $n\ge 2k+1$, and $|\mathcal J_i|> |\mathcal J_{i+1}|$ for $n\ge 2k+i-1$ and $i\ge 2$. For $n<2k+i-1$ we have $|\mathcal J_i| = |\mathcal J_{i+1}| = \ldots = |\mathcal J_{k-1}|$.\footnote{Indeed, the difference $|\mathcal J_i|-|\mathcal J_{i+1}|$ is ${n-k-2\choose k-2}-1$ for $i=1$ and ${n-k-i\choose k-1}-{n-k-i-1\choose k-1}={n-k-i-1\choose k-2}$ for $i\ge 2$. The last expression is strictly positive whenever $n\ge 2k+i-1$ and is $0$ if $2k\le n<2k+i-1$.}

\begin{thm}[Han and Kohayakawa \cite{HK}]\label{thmhk} Let $n>2k$, $k\ge 4$. Then any intersecting family $\ff$ with $\gamma(\ff)\ge 2$ satisfies
\begin{equation}\label{eqhk}|\ff|\le {n-1\choose k-1}-{n-k-1\choose k-1}-{n-k-2\choose k-2}+2,\end{equation}
moreover, for $k\ge 5$ the equality is attained only on the families isomorphic to $\mathcal J_2$.
\end{thm}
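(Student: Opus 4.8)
The plan is to reduce everything to estimating $|\hh_{\bb}|+|\bb|$, where, having picked an element $1$ that achieves the diversity, we set $\bb:=\ff(\bar 1)$ and let $\hh_{\bb}$ be the family of $(k-1)$-subsets of $[2,n]$ meeting every member of $\bb$. With this choice $\bb$ is intersecting, $|\bb|=\gamma(\ff)=:s\ge 2$, and every member of $\ff(1)$ meets every member of $\bb$, so
\begin{equation*}
\ff(1)\subseteq\hh_{\bb}:=\Big\{G\in{[2,n]\choose k-1}\ :\ G\cap B\ne\emptyset\text{ for all }B\in\bb\Big\},\qquad |\ff|\le|\hh_{\bb}|+s .
\end{equation*}
It therefore suffices to show $|\hh_{\bb}|+s\le{n-1\choose k-1}-{n-k-1\choose k-1}-{n-k-2\choose k-2}+2$, and, for $k\ge 5$, that equality forces the configuration defining $\mathcal J_2$.

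The case $s=2$ is where the extremal family appears. Writing $\bb=\{B_1,B_2\}$, inclusion–exclusion gives $|\hh_{\bb}|={n-1\choose k-1}-2{n-k-1\choose k-1}+{n-1-|B_1\cup B_2|\choose k-1}$; since $B_1\ne B_2$ and $B_1\cap B_2\ne\emptyset$ we have $|B_1\cup B_2|\ge k+1$, hence $|\hh_{\bb}|\le{n-1\choose k-1}-2{n-k-1\choose k-1}+{n-k-2\choose k-1}$, and one application of Pascal's identity rewrites the last expression as ${n-1\choose k-1}-{n-k-1\choose k-1}-{n-k-2\choose k-2}$. Adding $2$ gives exactly \eqref{eqhk}. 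In case of equality one must have $\ff(1)=\hh_{\bb}$ and $|B_1\cap B_2|=k-1$, and relabelling $[2,n]$ so that $B_1=[2,k+1]$, $B_2=[3,k+2]$ identifies $\ff$ with $\mathcal J_2$.

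For $s\ge 3$ the aim is the strict inequality — that $|\ff|$ is smaller than the bound in \eqref{eqhk} — which also settles $k=4$, where uniqueness is not expected: a short computation shows that for $k=4$ the quantity $|\mathcal H_3|$ equals the right-hand side of \eqref{eqhk}, so $\mathcal H_3$ (and with it $\mathcal H_2$) is a further extremal family. I would split on $s$. If $s\ge n-k$, then ${n-k\choose n-k-1}=n-k\le s$, so Theorem \ref{thm1} applies with $u=k-1$ (admissible since $k\ge 4$) and yields $|\ff|\le{n-1\choose k-1}+(n-k)-{n-k\choose k-1}=|\mathcal H_{k-1}|$; Pascal's identity gives that $|\mathcal H_{k-1}|$ minus the right-hand side of \eqref{eqhk} equals $(n-k-2)-{n-k-2\choose k-3}$, which is negative for $k\ge 5$ and zero for $k=4$. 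If $3\le s\le n-k-1$, I would use the extremal estimate that among intersecting families $\bb$ of $k$-sets with $|\bb|=s$ the quantity $|\hh_{\bb}|$ is maximised by a sunflower with $s$ singleton petals attached to a common $(k-1)$-element core $C$, for which $|\hh_{\bb}|={n-1\choose k-1}-{n-k\choose k-1}+{n-k-s\choose k-1-s}$; substituting and carrying out elementary binomial manipulations shows $|\hh_{\bb}|+s$ is strictly below the bound in \eqref{eqhk} for every $n>2k$ and $k\ge 4$ in this range.

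The main obstacle is this last extremal estimate: deciding which small intersecting families of $k$-sets maximise the number of common $(k-1)$-element transversals. A two- or three-set inclusion–exclusion on $\bb$ is already too weak once $n\ge 3k-1$ (the two-term Bonferroni bound uses up all the available slack), so one genuinely has to compress/shift $\bb$ and reason about its full structure, and — so that the estimate goes through for small $k$ — to keep control of the runner-up configuration, a $\Delta$-system consisting of a triangle sitting on a common $(k-2)$-element core. The remaining tasks — checking that the two sub-ranges of $s$ together cover all $s\ge 3$, verifying the handful of Pascal-identity inequalities, and confirming that for $k\ge 5$ the only equality case is the one with $s=2$, namely $\mathcal J_2$ — are routine but must be carried out with care.
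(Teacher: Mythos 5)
Your decomposition is genuinely different from the paper's: rather than deducing the theorem from the general classification (Theorem~\ref{thmclass2} applied with $t=k-1$), you work directly with $\bb:=\ff(\bar 1)$ and the family $\hh_\bb$ of $(k-1)$-subsets of $[2,n]$ meeting every member of $\bb$, and try to bound $|\hh_\bb|+|\bb|$ for each value of $s=|\bb|$. The case $s=2$ via inclusion--exclusion and a Pascal identity is correct, and the equality analysis there correctly produces $\mathcal J_2$; the case $s\ge n-k$ via Theorem~\ref{thm1} with $u=k-1$ is also sound, and your computation that $|\mathcal H_{k-1}|$ minus the right-hand side of \eqref{eqhk} is $(n-k-2)-\binom{n-k-2}{k-3}$, negative for $k\ge 5$ and zero for $k=4$, checks out.

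However, the middle regime $3\le s\le n-k-1$ rests entirely on the ``extremal estimate'' that, among intersecting families $\bb\subset\binom{[2,n]}{k}$ of fixed size $s$, the quantity $|\hh_\bb|$ is maximised by a sunflower with a $(k-1)$-element core and singleton petals, with value $\binom{n-1}{k-1}-\binom{n-k}{k-1}+\binom{n-k-s}{k-1-s}$. You state this without proof and concede it is ``the main obstacle.'' This is not a routine lemma one can cite or dispatch with a short shifting argument: it is essentially the structural core of the whole problem (it amounts to showing that for each fixed value of the diversity $s$ the extremal families are the $\mathcal E_s$ of the paper, which is precisely the content the paper establishes via Corollary~\ref{corkz}, the bipartite switching of Section~\ref{sec41}, and Lemma~\ref{lemmin}). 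Moreover, as you yourself observe, a naive two- or three-set Bonferroni bound on $\hh_\bb$ is not strong enough once $n$ is moderately large, and the runner-up $\Delta$-system with $(k-2)$-core is close enough that the estimate is delicate for small $k$. So the proposal identifies the right target and handles the bracketing ranges of $s$ correctly, but the claim that carries the whole middle range --- and therefore the whole theorem --- is left unproven, and filling it in would require machinery comparable to what the paper develops. As written, this is a genuine gap, not a routine deferred verification.
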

We note that Han and Kohayakawa also treated the case $k= 3$, as well as described the cases of equality for  $k=4$. These cases are more tedious and we omit them. Actually, a slightly weaker version of the theorem above (without uniqueness) is a consequence of the main result in the paper by Hilton and Milner \cite{HM}.

Given $l\ge 2$, denote by $\mathcal E_{l}$ an inclusion-maximal $k$-uniform intersecting family with $|\mathcal E_l(\bar 1)|=l$, $|\bigcap_{E\in \mathcal E_l(\bar 1)}E|=k-1$ (note that the family is defined up to isomorphism). Note that $\mathcal J_2$ is isomorphic to $\mathcal E_2$.
 The following theorem is one of the main results due to Kostochka and Mubayi \cite{KostM} (they have also treated the cases $k=3$ and $4$, which we omit here).
\begin{thm}[Kostochka and Mubayi \cite{KostM}]\label{thmko} Let $k\ge 5$ and $n=n(k)$ be sufficiently large. If $\ff\subset {[n]\choose k}$ is intersecting and $|\ff|> |\mathcal J_3|$ then  $\ff\subset \mathcal E_l$ for some $l\in \{0,\ldots, k-1, n-k\}$.
\end{thm}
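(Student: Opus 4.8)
The plan is to pass to a maximal intersecting family, localise the diversity at a single coordinate, and show that the trace of the family away from that coordinate is forced to be a near‑sunflower with a common $(k-1)$‑element core; maximality then identifies $\ff$ with one of the $\mathcal E_l$. Concretely: since $\ff$ lies in a maximal intersecting $\ff'$ with $|\ff'|\ge|\ff|>|\mathcal J_3|$, and $\ff'\subseteq\mathcal E_l$ forces $\ff\subseteq\mathcal E_l$, I may assume $\ff$ maximal. Let $\gamma:=\gamma(\ff)$ and fix a coordinate, say $1$, with $|\ff(\bar1)|=\gamma$; if $\gamma=0$ then $\ff=\mathcal E_0$ is the full star, so assume $\gamma\ge1$. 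Put $W:=\{T\in\binom{[2,n]}{k-1}:T\cap A=\emptyset\text{ for some }A\in\ff(\bar1)\}$. Maximality gives $\ff(1)=\binom{[2,n]}{k-1}\setminus W$, hence $|\ff|=\gamma+\binom{n-1}{k-1}-|W|$; and since $\binom{n-1}{k-1}-|\mathcal J_3|=\binom{n-k-1}{k-1}+\binom{n-k-2}{k-2}+\binom{n-k-3}{k-2}-2$, the hypothesis $|\ff|>|\mathcal J_3|$ becomes
\[|W|\ <\ \gamma-2+\binom{n-k-1}{k-1}+\binom{n-k-2}{k-2}+\binom{n-k-3}{k-2}. \qquad (\dagger)\]

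\textbf{Step 1: the diversity is small, and the endgame.} If $\gamma\ge\binom{n-k+1}{2}=\binom{n-u-1}{n-k-1}$ with $u=k-2$ (admissible since $k\ge5$), Theorem~\ref{thm1} yields $|\ff|\le\binom{n-1}{k-1}+\binom{n-k+1}{2}-\binom{n-k+1}{k-1}$, which, expanded by the hockey‑stick identity, is $<|\mathcal J_3|$ for $n$ large --- a contradiction; so $\gamma<\binom{n-k+1}{2}$. I claim moreover that \emph{$\ff(\bar1)$ has a common $(k-1)$‑element subset}. Granting this, $\ff(\bar1)=\{S\cup\{x_i\}:i\in[\gamma]\}$ for a $(k-1)$‑set $S\subseteq[2,n]$ and distinct $x_i\notin S\cup\{1\}$; maximality then determines $\ff(1)$ exactly (all $(k-1)$‑sets meeting $S$, plus, when $\gamma\le k-1$, those containing $\{x_1,\dots,x_\gamma\}$ and avoiding $S$), so $\ff=\mathcal E_\gamma$ (with $\mathcal E_0$ the full star and $\mathcal E_1=\hh_k$). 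Finally, for $k\le\gamma\le n-k-1$ one could still adjoin $S\cup\{x_{\gamma+1}\}$ to $\ff$, contradicting maximality; hence $\gamma\in\{0,1,\dots,k-1\}\cup\{n-k\}$, which proves the theorem.

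\textbf{Step 2: proof of the claim.} Suppose $\ff(\bar1)$ has no common $(k-1)$‑subset; I show $(\dagger)$ fails. As $|W|\ge\binom{n-k-1}{k-1}$ from the $(k-1)$‑sets disjoint from one fixed member, it suffices to exhibit a further $\binom{n-k-2}{k-2}+\binom{n-k-3}{k-2}+\gamma-2$ elements of $W$. Put $\mu:=\min_{A\ne A'}|A\cap A'|$ over $\ff(\bar1)$. If $\mu\le k-3$, pick $A_1,A_2$ with $|A_1\cup A_2|\ge k+3$: inclusion--exclusion gives $|W|\ge 2\binom{n-k-1}{k-1}-\binom{n-k-4}{k-1}$, which beats the required amount by $\binom{n-k-4}{k-2}-(\gamma-2)\ge0$ for $n$ large (since $\gamma<\binom{n-k+1}{2}$ and $k-2>2$), so $(\dagger)$ fails. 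If $\mu\ge k-2$, then any two members of $\ff(\bar1)$ differ in at most two elements; set $C^*:=\bigcap_{A\in\ff(\bar1)}A$, $c^*:=|C^*|\le k-2$. When $c^*\le k-3$, the quotient family $\{A\setminus C^*:A\in\ff(\bar1)\}$ is an intersecting $(k-c^*)$‑uniform family with empty intersection and pairwise intersections $\ge(k-c^*)-2$; by Hilton--Milner‑/Ahlswede--Khachatrian‑type bounds its size is $O_k(n)$, so $\gamma-2$ is of strictly smaller order than the gain $\Theta\!\big(\binom{n-k-3}{k-3}\big)$ (or larger) produced by running inclusion--exclusion over two or three suitably chosen members of $\ff(\bar1)$, and $(\dagger)$ fails. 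When $c^*=k-2$, one has $\ff(\bar1)=\{C^*\cup P:P\in\mathcal P\}$ for a graph $\mathcal P$ on $[2,n]\setminus C^*$ with no universal vertex (no common $(k-1)$‑subset), and then
\[|W|=\binom{n-k+1}{k-1}-\big|\{T:|T|=k-1,\ T\subseteq[2,n]\setminus C^*,\ T\text{ covers }\mathcal P\}\big|;\]
since $\mathcal P$ has covering number $\ge2$, the number of $(k-1)$‑element covers is small enough that $(\dagger)$ fails, with equality attained precisely when $\mathcal P$ is a single pair of disjoint edges, i.e.\ $\gamma=2$ and $\ff\cong\mathcal J_3$. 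In all cases $(\dagger)$ is violated, which establishes the claim and hence the theorem.

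\textbf{Main obstacle.} The delicate point is the $c^*=k-2$ sub‑case of Step~2: one needs a clean upper bound on the number of $(k-1)$‑element covers of an arbitrary universal‑vertex‑free graph, sharp enough that the only equality configuration is the Han--Kohayakawa family $\mathcal J_3$. Throughout the $\mu\ge k-2$ analysis one must also track the exact sizes of the pairwise and triple unions of members of $\ff(\bar1)$ across the various clustered configurations, and it is there that the bookkeeping is heaviest; the $k=4$ borderline (where several of these degree comparisons become tight) is exactly why that case is excluded.
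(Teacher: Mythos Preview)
Your route is genuinely different from the paper's. The paper deduces this statement as a one-line corollary of its main structural theorem (Theorem~\ref{thmclass2}): if $|\bigcap_{F\in\ff(\bar1)}F|\le k-2$ then the second part of that theorem gives $|\ff|\le|\mathcal J_3|$, so the hypothesis $|\ff|>|\mathcal J_3|$ forces the common core of $\ff(\bar 1)$ to have size $k-1$, whence $\ff\subset\mathcal E_l$. The heavy lifting is in the proof of Theorem~\ref{thmclass2}, which proceeds by \emph{shifting} (carefully, so as to keep $\mm$ intact) followed by a sequence of \emph{bipartite switching} steps using the cross-intersecting bounds of Corollary~\ref{corkz}; the comparison among minimal families is then handled by Lemma~\ref{lemmin}. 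Your approach is instead a direct count: bound $|W|$ by inclusion--exclusion and a case split on $\mu$ and $c^*$, in the spirit of the original Kostochka--Mubayi argument. The advantage of the paper's method is that it yields a sharp structural theorem valid for \emph{all} $n>2k$, not only $n$ large, and handles all values of $t\ge3$ uniformly; your approach, when it works, gives a more self-contained proof of this particular corollary but intrinsically needs $n$ large.

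That said, your Step~2 is not a proof but a programme, and you say so yourself. Two places are genuinely incomplete. First, in the sub-case $\mu\ge k-2$, $c^*\le k-3$, the appeal to ``Hilton--Milner/Ahlswede--Khachatrian-type bounds'' to get $\gamma=O_k(n)$ is not justified: you have an $(k-c^*)$-uniform family with empty common intersection and all pairwise intersections of size $\ge k-c^*-2$, and you need an explicit bound on its size \emph{and} you need to exhibit the ``two or three suitably chosen members'' whose inclusion--exclusion gain is $\Theta\big(\binom{n}{k-3}\big)$; neither is written down. Second, and more seriously, the $c^*=k-2$ sub-case is asserted, not proved: you need a sharp upper bound on the number of $(k-1)$-element vertex covers of a graph with no universal vertex, tight exactly at a pair of disjoint edges, and you have not supplied one. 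This is precisely the computation that distinguishes $\mathcal J_3$ from all other $c^*=k-2$ configurations, and it is the crux of the theorem; without it the argument does not close. The paper circumvents this entirely via Lemma~\ref{lemmin}, which compares \emph{all} minimal $\tau=2$ families at once using the Bollob\'as set-pair inequality and a clean telescoping count.
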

It is  not difficult to see that for $k-1<l<n-k$ we have $\mathcal E_l\subset \mathcal E_{n-k}$: we have $\mathcal E_l(1) = \mathcal E_{n-k}(1)$ for this range. That is why these families are missing from the statement of the theorem.
The authors of \cite{KostM} used the Delta-systems method of Frankl.\footnote{The goal of their paper, was, in a way, to draw the attention of the researchers to this method.} 

Many results in extremal set theory are much easier to obtain once one assumes that $n$ is sufficiently large compared to $k$. (The possibility to apply the Delta-systems method being one of the reasons.) In particular, the bound on $n$ in Theorem~\ref{thmko} is doubly exponential in $k$.\footnote{Here we should note that a more efficient delta-system type approach and a more careful analysis can improve these dependencies to polynomial.}
Below, we state a result that is much more general than Theorems~\ref{thmhk} and~\ref{thmko} and, additionally, holds without any restriction on $n$.

We say that a family $\mm$ is {\it minimal w.r.t. common intersection} if, for any $M'\in \mm$, we have $|\bigcap_{M\in \mm\setminus\{M'\}}M|>|\bigcap_{M\in \mm}M|$.
The following theorem is the main result of this paper.

\begin{thm}\label{thmclass2} Assume that $n>2k\ge 8$. Consider an intersecting family $\ff\subset{[n]\choose k}$ with  $\gamma(\ff) = |\ff(\bar 1)|$.
Take a  subfamily $\mathcal M\subset \ff(\bar 1)$, which is minimal w.r.t. common intersection and such that $|\bigcap_{M\in \mm}M|= t$. Take the (unique) inclusion-maximal intersecting family $\ff'\subset {[n]\choose k}$, such that $\ff'(\bar 1) = \mm$. If $t\ge 3$
then we have
\begin{equation}\label{eqclass1} |\ff|\le |\ff'|,
\end{equation}
and, for $k\ge 5$, equality is possible if and only if $\ff$ is isomorphic to $\ff'$. If $k=4$ and $t=3$ then the equality is possible if and only if $\ff$ is isomorphic to one of $\mathcal E_{n-k},\mathcal J_2$.

Moreover, if $\ff$ is as above and  $|\bigcap_{F\in \ff(\bar 1)}F|\le t$ for some $t\ge 3$, then
\begin{equation}\label{eqclass2} |\ff|\le |\mathcal J_{k-t+1}|,
\end{equation}
and, for $k\ge 5$ and $n\ge 2k+(k-t+1)-1 = 3k-t$, equality is possible only if $\ff$ is isomorphic to $\mathcal J_{k-t+1}$. If $k\ge 5$ and $2k+1\le n<3k-t$, then equality holds only if $\ff$ is isomorphic to $\mathcal J_i$ with $i\ge k-t+1$.
\end{thm}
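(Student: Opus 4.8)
\emph{Normalisation.} The hypothesis $\gamma(\ff)=|\ff(\bar 1)|$ says precisely that $1$ has the largest degree in $\ff$. Adding to $\ff$ every set $\{1\}\cup T'$ with $T'\in\binom{[2,n]}{k-1}$ meeting all of $\ff(\bar 1)$ keeps $\ff$ intersecting and keeps $1$ of maximum degree, so we may assume $\ff$ is maximal with a fixed trace $\G:=\ff(\bar 1)$. Writing $\mathcal T(\mathcal H)$ for the family of $(k-1)$-subsets of $[2,n]$ meeting every member of a family $\mathcal H\subset 2^{[2,n]}$, we then have $\ff(1)=\mathcal T(\G)$ and $|\ff|=|\G|+|\mathcal T(\G)|$. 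Likewise $|\ff'|=|\mm|+|\mathcal T(\mm)|$, and since $\mm\subset\G$ we have $\mathcal T(\G)\subset\mathcal T(\mm)$. Thus \eqref{eqclass1} is equivalent to the \emph{compensation inequality}
\[ |\G\setminus\mm|\ \le\ |\mathcal T(\mm)\setminus\mathcal T(\G)|, \]
i.e. each extra member of $\ff(\bar 1)$ beyond $\mm$ must be ``paid for'' by a $(k-1)$-transversal of $\mm$ destroyed on passing to $\G$.

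\emph{Structure of $\mm$.} Minimality w.r.t. common intersection yields for each $M\in\mm$ a private element $x_M$ lying in every member of $\mm$ except $M$; the $x_M$ are distinct and avoid $T:=\bigcap_{M\in\mm}M$, so $M\supset T\cup\{x_{M'}:M'\ne M\}$ and hence $|\mm|\le k-t+1$. Since $t\ge 3$ this gives $|\mm|\le k-2$, which is the decisive point: for any $G\in\binom{[2,n]}{k}\setminus\mm$ one has $M\setminus G\ne\emptyset$ for all $M\in\mm$, a minimal transversal of $\{M\setminus G:M\in\mm\}$ has at most $|\mm|\le k-2$ elements, and (as $n>2k$) it can be completed inside $[2,n]\setminus G$ to a $(k-1)$-set. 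Hence every $G\in\G\setminus\mm$ destroys at least one transversal of $\mm$; if moreover $G\not\supset T$, it destroys a whole layer of $\binom{n-k-2}{k-2}$ transversals, namely those meeting $T\setminus G$ and disjoint from $G$.

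\emph{The compensation inequality.} I would prove it by taking a counterexample with $|\G\setminus\mm|$ minimal. If some $G_0\in\G\setminus\mm$ is \emph{active}, meaning $\mathcal T(\G\setminus\{G_0\})\supsetneq\mathcal T(\G)$, then passing from $\G$ to $\G\setminus\{G_0\}$ and re-maximising keeps $1$ of maximum degree, decreases $|\G\setminus\mm|$ by $1$ and increases $|\mathcal T(\G)|$ by at least $1$; one checks this preserves the strict failure of the compensation inequality, contradicting minimality. So no $G_0$ is active; but then, for every destroyed $T'\in\mathcal T(\mm)\setminus\mathcal T(\G)$, the set of members of $\G\setminus\mm$ disjoint from $T'$ has size at least $2$ (if it were $\{G_0\}$ then $T'\in\mathcal T(\G\setminus\{G_0\})=\mathcal T(\G)$, a contradiction). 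Double counting the disjointness incidences between $\G\setminus\mm$ and $\mathcal T(\mm)\setminus\mathcal T(\G)$, together with the lower bounds of the previous step, the fact that $\G$ is intersecting, and maximality of $\deg_{\ff}(1)$, then pins $\G$ down so rigidly that it must be very close to a star or an $\mathcal E_l$-type family inside $[2,n]$; in the star case some $a\in\bigcap_{G\in\G}G$ satisfies $\deg_{\ff}(a)=\deg_{\G}(a)+\deg_{\mathcal T(\G)}(a)>|\mathcal T(\G)|=\deg_{\ff}(1)$, contradicting the normalisation, and in the remaining case the same estimate is contradictory for $k\ge 5$ but becomes tight precisely for $k=4,\ t=3$, producing the family $\mathcal E_{n-k}$. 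Tracking the slack in these inequalities also gives the equality statement in \eqref{eqclass1}: for $k\ge 5$ equality forces $\G=\mm$, hence $\ff\cong\ff'$, while for $k=4,\ t=3$ the possibilities are $\ff\cong\ff'\cong\mathcal J_2$ and $\ff\cong\mathcal E_{n-k}$. I expect this last stage — controlling the ``no active set'' case and extracting the contradicting element uniformly in $n>2k$ — to be the main obstacle.

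\emph{Deducing \eqref{eqclass2}.} Put $s:=|\bigcap_{F\in\ff(\bar 1)}F|\le t$. If $s\ge 3$, take $\mm$ to be a minimal subfamily of $\ff(\bar 1)$ with $\bigcap\mm=\bigcap\ff(\bar 1)$, so $|\bigcap\mm|=s$; then \eqref{eqclass1} gives $|\ff|\le|\ff'|$, where $\ff'$ depends only on the isomorphism type of $\mm$. A routine binomial computation — using $M\supset T\cup\{x_{M'}:M'\ne M\}$ to describe $\mathcal T(\mm)$ — shows that among minimal families with common intersection of size $s$ the quantity $|\ff'|=|\mm|+|\mathcal T(\mm)|$ is maximised when $|\mm|=2$, in which case $\mm\cong\{I_1,I_{k-s+1}\}$ and $\ff'\cong\mathcal J_{k-s+1}$; hence $|\ff|\le|\mathcal J_{k-s+1}|\le|\mathcal J_{k-t+1}|$ by the monotonicity $|\mathcal J_i|\ge|\mathcal J_{i+1}|$. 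The residual range $s\le 2$ (where necessarily $t\ge 3>s$) is treated directly, the bound being far from tight there. Finally, equality in \eqref{eqclass2} forces, for $k\ge 5$, first $|\ff|=|\ff'|$ and hence $\ff\cong\ff'\cong\mathcal J_{k-s+1}$, and then $\mathcal J_{k-s+1}\cong\mathcal J_{k-t+1}$; by the strict inequalities among the $|\mathcal J_i|$ this yields $s=t$ when $n\ge 3k-t$, and only $i\ge k-t+1$ when $2k+1\le n<3k-t$.
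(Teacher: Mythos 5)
The compensation inequality $|\G\setminus\mm|\le|\mathcal T(\mm)\setminus\mathcal T(\G)|$ is the correct reformulation of \eqref{eqclass1}, and the observation $|\mm|\le k-t+1$ via the private elements is sound. However, the proposal has two genuine gaps at precisely the places the argument needs to be carried.

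First, the proof of the compensation inequality is not complete. The ``active set'' reduction is plausible (though you would still need to check that deleting $G_0$ and re-maximizing $\ff(1)$ preserves the normalisation $\gamma(\ff)=|\ff(\bar 1)|$, which is not automatic since removing a set from $\ff(\bar 1)$ reduces $|\ff(\bar 1)|$ while $|\ff(\bar j)|$ for $j\notin G_0$ is unchanged). But the residual ``no active set'' case --- the only case where something nontrivial has to be proved --- is left as a sketch: the double count of disjointness incidences together with ``the fact that $\G$ is intersecting and maximality of $\deg_{\ff}(1)$'' is not shown to yield any explicit inequality, and you yourself flag this as ``the main obstacle.'' This is exactly the content that the paper replaces by a concrete two-phase mechanism: an iterated shift in $[2,t+1]$ establishing \eqref{eqcard} followed by a sequence of bipartite cross-intersecting exchanges governed by Corollary~\ref{corkz}, first with the graphs $G_i$ and the cardinality bound \eqref{eqcard2} (itself derived from Kruskal--Katona in Lov\'asz' form) and then with the graphs $G(t',I)$. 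Nothing in your outline supplies a substitute for this machinery that works uniformly for all $n>2k$, and a straightforward incidence count does not obviously close the gap, since the transversals destroyed by different $G$'s overlap heavily.

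Second, the reduction of \eqref{eqclass2} to \eqref{eqclass1} via ``a routine binomial computation showing that $|\mm|+|\mathcal T(\mm)|$ is maximized at $|\mm|=2$'' conceals the actual content of the paper's Lemma~\ref{lemmin}. That lemma is not a routine binomial manipulation: it applies Bollob\'as' set-pairs inequality to bound $|\mathcal H|\le s+1$, then sets up a layered count of $\ff$ giving the telescoping estimate \eqref{eqfz3}, and crucially uses $f(z-1)-f(z)>1$ to turn a comparison of cross-intersecting pairs into a monotonicity-in-$z$ statement. Calling this ``routine'' elides the step that makes the second half of the theorem work. Moreover, your fallback for the case $s=|\bigcap\ff(\bar 1)|\le 2$ (``treated directly, the bound being far from tight there'') is not justified; the paper instead shows, using shiftedness of $\ff(\bar 1)$, that one can always locate a minimal subfamily with common intersection of size exactly $t\ge 3$, and then applies the first part to that subfamily. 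Without such a step, you do not get to apply \eqref{eqclass1} at all when $s\le 2$.

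In short: the skeleton (normalise, compare $\G$ with $\mm$ on each side of the bipartition, reduce \eqref{eqclass2} to an optimisation over $\mm$) matches the paper's strategy, but the two load-bearing steps --- the cross-intersecting exchange argument that proves $\G=\mm$ at the optimum, and the extremal lemma singling out $|\mm|=2$ --- are asserted rather than proved, and neither is routine.
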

This theorem generalizes Theorems~\ref{thmhk} and~\ref{thmko} and gives a reasonable classification of {\it all} large intersecting families.
We also note that we cannot in general replace the condition $t\ge 3$
by $t\ge 2$.
Indeed, one can see that the family $\mathcal H_2$ (cf. \eqref{eqhu}) is larger than $\mathcal J_{k-1}$ for large $n$.\footnote{The size of $\mathcal J_{k-1}$ can be upper bounded by ${n-2\choose k-2}+{n-3\choose k-2}+2+(k-2)\big({n-4\choose k-2}-{n-k-2\choose k-2}\big)<{n-2\choose k-2}+2{n-3\choose k-2}$ for, say, $n>2k^3$.}

\begin{cor}\label{cormk} Let $n>2k\ge 10$ and $\ff\subset {[n]\choose k}$ be an intersecting family.
\begin{enumerate}
                           \item If  $|\ff|> |\mathcal J_3|$ then  $\ff\subset \mathcal E_l$ for some $l\in \{0,\ldots, k-1, n-k\}$.
                           \item Assume that  $|\ff|= |\mathcal J_3|$ and $\ff$ is not a subfamily of any $\mathcal E_\ell$. Then, if $n\ge 2k+2$, the family  $\ff$ is isomorphic to $\mathcal J_3$. If $n = 2k+1$ then $\ff$ is isomorphic to $\mathcal J_i$ for some $i\ge 3$.
                         \end{enumerate}
\end{cor}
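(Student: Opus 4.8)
The plan is to derive Corollary~\ref{cormk} from Theorem~\ref{thmclass2}, organizing the whole argument around the ``core size'' $t^{*}:=|\bigcap_{F\in\ff(\bar 1)}F|$ of the non-star part. After relabelling the ground set we may assume that the diversity is attained at the element $1$, i.e.\ $\gamma(\ff)=|\ff(\bar 1)|$, which is exactly the hypothesis of Theorem~\ref{thmclass2} and is harmless since $\gamma(\ff)=\min_i|\ff(\bar i)|$. Throughout, $k\ge 5$ and $n\ge 2k+1$. If $\ff(\bar 1)=\emptyset$, then $\ff$ is a star, hence $\ff\subset\mathcal E_{0}$ (the full star), and there is nothing more to prove; so assume $\ff(\bar 1)\ne\emptyset$, so that $0\le t^{*}\le k$.

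First I would dispose of the ``large core'' range $t^{*}\in\{k-1,k\}$ directly, without Theorem~\ref{thmclass2}. If $t^{*}=k$ then $\ff(\bar 1)$ is a single $k$-set, so $\ff$ is (isomorphic to) a subfamily of the Hilton--Milner family $\mathcal E_{1}$. If $t^{*}=k-1$, all members of $\ff(\bar 1)$ contain a common $(k-1)$-set, so $\ff(\bar 1)$ is isomorphic to $\mathcal E_{l}(\bar 1)$ with $l:=|\ff(\bar 1)|\ge 2$; since $\mathcal E_{l}$ is the inclusion-maximal intersecting family having that $\bar 1$-part (a set through $1$ being admissible exactly when it meets each of the $l$ members of $\ff(\bar 1)$, while intersectingness inside the part is inherited from $\ff$), we get $\ff\subset\mathcal E_{l}$; and if $k-1<l<n-k$ we additionally use the inclusion $\mathcal E_{l}\subset\mathcal E_{n-k}$ recalled in the text. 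Hence for $t^{*}\ge k-1$ the family $\ff$ lies in some $\mathcal E_{l}$ with $l\in\{0,1,\dots,k-1,n-k\}$; this settles part~(1) in this range and makes the hypothesis of part~(2) unsatisfiable here.

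It then remains to treat $t^{*}\le k-2$. Here I would apply the ``moreover'' clause of Theorem~\ref{thmclass2} with $t:=\max\{3,t^{*}\}$ (legitimate, since $t^{*}\le t$ and $3\le t\le k-2$): it gives $|\ff|\le|\mathcal J_{k-t+1}|$ with the index $k-t+1$ lying in $\{3,\dots,k-2\}$, and since $|\mathcal J_{3}|\ge|\mathcal J_{4}|\ge\cdots$ for $k\ge 4$ we obtain $|\ff|\le|\mathcal J_{3}|$. For part~(1) this contradicts the assumption $|\ff|>|\mathcal J_{3}|$, so this case cannot occur, and together with the previous paragraph part~(1) is complete. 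For part~(2), the assumption $|\ff|=|\mathcal J_{3}|$ now forces $|\ff|=|\mathcal J_{k-t+1}|$, i.e.\ equality in \eqref{eqclass2}, so the equality clause of Theorem~\ref{thmclass2} applies; to convert it into the stated conclusion I would combine it with the strictness information recorded just before Theorem~\ref{thmhk}. Namely, if $n\ge 2k+2$ then $|\mathcal J_{3}|>|\mathcal J_{4}|$, so $|\mathcal J_{k-t+1}|=|\mathcal J_{3}|$ forces $k-t+1=3$, hence $t=k-2$, hence $n\ge 2k+2=3k-t$, hence $\ff\cong\mathcal J_{3}$; if $n=2k+1$ then the sequence $|\mathcal J_{3}|=|\mathcal J_{4}|=\cdots$ is constant and $n<3k-t$, so Theorem~\ref{thmclass2} yields only $\ff\cong\mathcal J_{i}$ for some $i\ge k-t+1\ge 3$.

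All the real content is inside Theorem~\ref{thmclass2}; the points in this deduction that require genuine care are the bookkeeping ones: pinning down $\mathcal E_{0}$ (the full star) and $\mathcal E_{1}$ (the Hilton--Milner family), and accounting for the collapse $\mathcal E_{l}\subset\mathcal E_{n-k}$ for $k-1<l<n-k$, so that the index set $\{0,\dots,k-1,n-k\}$ emerges exactly; the elementary inequality $|\mathcal J_{k-2}|\le|\mathcal J_{3}|$, which is the single place the bound $k\ge 5$ is used; and the precise thresholds ``$n\ge 2k+i-1$'' governing the strict inequality $|\mathcal J_{i}|>|\mathcal J_{i+1}|$, which is exactly what separates part~(2) into the case $n\ge 2k+2$ (where $\ff\cong\mathcal J_{3}$) and the case $n=2k+1$ (where one can only conclude $\ff\cong\mathcal J_{i}$ with $i\ge 3$).
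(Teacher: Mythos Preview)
Your proof is correct and follows essentially the same approach as the paper: split on the core size $t^*=|\bigcap_{F\in\ff(\bar 1)}F|$, handle $t^*\ge k-1$ by direct inspection, and for $t^*\le k-2$ invoke the ``moreover'' part of Theorem~\ref{thmclass2}. The only difference is that the paper applies Theorem~\ref{thmclass2} with $t=k-2$ directly (which already gives $|\ff|\le|\mathcal J_3|$ and makes the threshold $3k-t=2k+2$ appear immediately), whereas you apply it with $t=\max\{3,t^*\}$ and then argue separately that $|\mathcal J_{k-t+1}|=|\mathcal J_3|$ forces $k-t+1=3$; this extra step is harmless but unnecessary.
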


\begin{proof}[Proof of Corollary~\ref{cormk}]
1. Fix any $\ff$ with $|\ff|>|\mathcal J_3|$  and assume that $\gamma(\ff) = |\ff(\bar 1)|$. First assume that $|\bigcap_{F\in\ff(\bar 1)} F|\le k-2$. We are in position to apply the second part of Theorem~\ref{thmclass2} to $\ff$, and get a contradiction with $|\ff|>|\mathcal J_3|$.
Therefore, $|\bigcap_{F\in \ff(\bar 1)}F|=k-1$ and thus $\ff(\bar 1)$ is isomorphic to $\mathcal E_l(\bar 1)$ for $l = |\mathcal F(\bar 1)|$.
2. The condition that $\ff$ is not a subfamily of any $\mathcal E_\ell$ guarantees that $|\bigcap_{F\in\ff(\bar 1)} F|\le k-2$. Since $|\ff| = |\mathcal J_3|$ and $k\ge 5$, we can apply the uniqueness part of the second part of Theorem~\ref{thmclass2}.
\end{proof}

The deduction of Theorem~\ref{thmhk} from Theorem~\ref{thmclass2} is straightforward: just apply the second part of Theorem~\ref{thmclass2} with $t=k-1$. The proof of Theorem~\ref{thmclass2} is given in the next section. The main tool is a combination of careful shifting (which preserves the non-trivial part of the structure of the family) and an extended version of the  bipartite switching trick, introduced in   \cite{FK1}, \cite{KZ}. One key observation that allows us to prove Theorem~\ref{thmclass2} is that this bipartite switching is possible even in situations when we know practically nothing about the structure of the family. (Previously, it was used in very structured situations, e.g., when the family is shifted.) \vskip+0.1cm

{\bf Remark. } The main result of this paper is a part of an unpublished manuscript \cite{Kup73}. It was under the review at J. Eur. Math. Soc. for almost 5 years before it was rejected (with two moderately positive `quick opinion' reviews). Then we decided to split the paper into two parts and publish it separately, partly to make the text more reader-friendly and partly because much of the results in \cite{Kup73} concerning the sharp version of Theorem~\ref{thm1} turned out to be known. (They were obtained in an unjustly overlooked paper by Goldwasser \cite{Gold}.)

In the meantime, unaware of Theorem~\ref{thmclass2}, Huang and Peng~\cite{HP} proved a result that is slightly stronger than Corollary~\ref{cormk}. Translating our notation to the notation of \cite{HP} (cf. \cite[Theorem 1.5]{HP}), the family $\mathcal J_3$ is $\mathcal K_2$, the family $\mathcal E_\ell$ is $\mathcal J_\ell$, and the family $\mathcal H_i$ is $\mathcal G_i$. Also note that what is denoted $\mathcal G_{k-1}$ in \cite{HP} coincides with $\mathcal E_{n-k}$.   In our terms, in \cite[Theorem 1.5]{HP} the authors show that the largest intersecting family which is not contained in one of $\{$full star, $\mathcal J_1,\mathcal J_2\}$, is one of $\mathcal J_3$, $\mathcal E_3$ or $\mathcal E_{n-k}$. (They also compare their sizes and determine, which one is bigger depending on the parameters, which is a routine computation.) This is essentially the content of Corollary~\ref{cormk}, except the latter does not treat the case  $k=4$. We also note that the `exceptional' case $n=2k+1$ with extra equality examples $\mathcal J_4,\ldots, \mathcal J_{k-1}$ is overlooked in \cite{HP}.

Recently, another proof of \cite[Theorem 1.5]{HP} was  obtained in \cite{WLF}.

\section{Proof of Theorem~\ref{thmclass2}}\label{sec4}
\subsection{Preliminaries}

Recall the definition of shifting. For a given pair of indices $1\le i<j\le n$ and a set $A \subset [n]$, define the {\it $(i,j)$-shift} $S_{ij}(A)$ as follows. If $i\in A$ or $j\notin A$, then $S_{ij}(A) = A$. If $j\in A, i\notin A$, then $S_{ij}(A) := (A-\{j\})\cup \{i\}$.
The  $(i,j)$-shift $S_{ij}(\mathcal A)$ of a family $\mathcal A$ is defined as follows:
$$S_{ij}(\mathcal A) := \{S_{ij}(A)\ :\  A\in \mathcal A\}\cup \{A\ :\  A,S_{ij}(A)\in \mathcal A\}.$$
Shifting preserves sizes of sets and families. Moreover, it preserves the property of a family to be intersecting. We refer tje reader to a survey of Frankl \cite{F3}. When applied directly, it does not, however, preserve different non-triviality related properties. One of the challenges that we overcome in this paper is to carefully apply shifting so that the non-trivial structure is preserved.

We will need a two-family generalization of Theorem~\ref{thm1}. We say that the families $\aaa,\bb$ are {\it cross-intersecting} if any set from $\aaa$ intersects any set from $\bb$. The following result, modulo uniqueness, appeared in  \cite{KZ}. \begin{cor}[\cite{KZ}, Theorem 1]\label{corkz}
  Let $a,b>0$, $n>a+b$. Let $\aaa\subset {[n]\choose a},\ \bb\subset {[n]\choose b}$ be a pair of cross-intersecting families.

  If $b<a$, or $b\ge a$ and $|\bb|\le {n-b-1+a\choose a-1}$, then
\begin{equation}\label{eqcreasy} |\aaa|+|\bb|\le {n\choose a}.\end{equation}
Moreover, the displayed inequality is strict unless $|\bb|=0$.

If $b<a$ and $|\bb|\ge {n-j\choose b-j}$ for integer $j\in [b]$, then
\begin{equation}\label{eqcreasy2} |\aaa|+|\bb|\le {n\choose a}+{n-j\choose b-j}-{n-j\choose a}.\end{equation}
Moreover, if the inequality on $\bb$ is strict, then the inequality in the displayed formula above is also strict, unless $b=a-1$, $j=1$, and $|\bb| = {n\choose b}$.
\end{cor}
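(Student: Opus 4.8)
\emph{Overview.} Both inequalities are sharp for explicit pairs: \eqref{eqcreasy} for $\aaa=\binom{[n]}{a}$, $\bb=\emptyset$, and \eqref{eqcreasy2} for the \emph{$j$-compressed pair} $\aaa_j:=\{A\in\binom{[n]}{a}:A\cap[j]\ne\emptyset\}$, $\bb_j:=\{B\in\binom{[n]}{b}:[j]\subset B\}$, for which $|\aaa_j|+|\bb_j|=\binom{n}{a}-\binom{n-j}{a}+\binom{n-j}{b-j}$. I would prove both at once by Kruskal--Katona, followed by the analysis of a single real-variable function; shifting plays at most an auxiliary role.

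\emph{The main estimate.} Since $\aaa,\bb$ cross-intersect, every $a$-set disjoint from some $B\in\bb$ is absent from $\aaa$, so $\binom{[n]}{a}\setminus\aaa$ contains $\bigcup_{B\in\bb}\binom{[n]\setminus B}{a}$, which is the $a$-shadow of the $(n-b)$-uniform family $\bar\bb:=\{[n]\setminus B:B\in\bb\}$. Writing $|\bb|=\binom{x}{n-b}$ with real $x\ge n-b$, Kruskal--Katona in Lov\'asz's form gives $\bigl|\bigcup_{B\in\bb}\binom{[n]\setminus B}{a}\bigr|\ge\binom{x}{a}$, hence $|\aaa|+|\bb|\le F(x):=\binom{n}{a}-\binom{x}{a}+\binom{x}{n-b}$. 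Using $n>a+b$ one computes: $\binom{x}{n-b}<\binom{x}{a}$, equivalently $F(x)<\binom{n}{a}$, whenever $x<n+a-b$; on $[n-b,n]$ (and $a\ge2$) the function $F$ is first decreasing, then increasing; $F(n-j)=\binom{n}{a}-\binom{n-j}{a}+\binom{n-j}{b-j}$ (the value of the $j$-compressed pair), while $F(n)=\binom{n}{b}$, and $F(n)\le F(n-j)$ for every $j\ge1$, with equality only when $b=a-1$, $j=1$.

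\emph{Conclusion and equality.} Under the hypotheses of \eqref{eqcreasy} — either $b<a$, where $|\bb|\le\binom{n}{n-b}$ forces $x\le n<n+a-b$, or $b\ge a$ with $|\bb|$ below the stated threshold, which again gives $x<n+a-b$ — we get $F(x)<\binom{n}{a}$ unless $|\bb|=0$, i.e.\ \eqref{eqcreasy} with its strictness clause. Under the hypothesis $|\bb|\ge\binom{n-j}{b-j}$ of \eqref{eqcreasy2} (with $1\le j\le b$, so $n-j\in[n-b,n]$) we have $x\in[n-j,n]$, and since $F$ is decreasing-then-increasing on this interval, $|\aaa|+|\bb|\le F(x)\le\max\{F(n-j),F(n)\}=F(n-j)=\binom{n}{a}+\binom{n-j}{b-j}-\binom{n-j}{a}$, which is \eqref{eqcreasy2}. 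For the equality assertion, equality forces either $x=n-j$ together with $\bar\bb$ being Kruskal--Katona-extremal and $\aaa$ the full cover of $\bb$ (so, by the stability/uniqueness in Kruskal--Katona, $(\aaa,\bb)\cong(\aaa_j,\bb_j)$), or — only possible when $b=a-1$, $j=1$ — $x=n$, i.e.\ $\bb=\binom{[n]}{b}$ and then $\aaa=\emptyset$ because $n>a+b$; in the latter case $|\aaa|+|\bb|=\binom{n}{b}$ does equal the right-hand side, which is exactly the stated exception.

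\emph{Main obstacle.} The delicate point will be the real-variable analysis of $F$ — its unimodality on $[n-b,n]$ and the comparison $F(n)\le F(n-j)$, which is precisely where the inequality $n>a+b$ and the borderline value $b=a-1$ enter — together with the extremal-case analysis of Kruskal--Katona needed to turn the numerical bounds into the isomorphism-type (uniqueness) statements. An alternative organization, in the spirit of \cite{FK1}, \cite{KZ}, is to first apply simultaneous $(i,j)$-shifts to $\aaa$ and $\bb$ (which preserve the uniformities, the sizes, the cross-intersecting property and the size hypothesis on $\bb$) and then use a bipartite switching to bring $\bb$, at fixed $|\bb|$ and without decreasing $|\aaa|+|\bb|$, to a Kruskal--Katona-extremal configuration with $\aaa$ its full cover; this makes $|\aaa|+|\bb|=F(x)$ an equality for the reduced pair and shortens the equality analysis.
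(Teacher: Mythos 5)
The paper does not actually prove this corollary from scratch: it takes \eqref{eqcreasy} and \eqref{eqcreasy2} directly from \cite[Theorem~1]{KZ} (whose proof there is a counting argument over suitably constructed regular bipartite graphs), derives the strictness of \eqref{eqcreasy} by comparing with \eqref{eqcreasy2} at $j=b$ (and with \cite[Theorem~1, part~2]{KZ} at $j=b,b+1$ when $b\ge a$), and cites \cite[Theorem~2.12(3)]{Kup73} for the equality case of \eqref{eqcreasy2}. Your proof is the classical complementation-plus-Kruskal--Katona route (in the style of Frankl and Frankl--Tokushige) and is therefore a genuinely different and more self-contained argument. The key reduction — that $\binom{[n]}{a}\setminus\aaa$ contains the $a$-shadow of $\bar\bb$, hence $|\aaa|+|\bb|\le F(x):=\binom{n}{a}-\binom{x}{a}+\binom{x}{n-b}$ with $|\bb|=\binom{x}{n-b}$ — is correct, and the analysis of $F$ you flag as delicate does go through: the ratio $\binom{x}{n-b}/\binom{x}{a}=\prod_{m=a}^{n-b-1}\frac{x-m}{m+1}$ is strictly increasing on $(n-b-1,\infty)$ and equals $1$ at $x=n+a-b$, which gives both $F(x)<\binom{n}{a}$ for $x<n+a-b$ (and the size hypotheses on $\bb$ translate exactly to $x\le n$ when $b<a$ and $x\le n+a-b-1$ when $b\ge a$) and the unimodality of $F$ via $F(x+1)-F(x)=\binom{x}{n-b-1}-\binom{x}{a-1}$; the comparison $F(n)\le F(n-j)$ reduces, term by term, to $\binom{n-i}{a-1}\ge\binom{n-i}{b-i+1}$ for $i\in[j]$, which holds strictly under $n>a+b$ except exactly when $i=1$ and $b=a-1$, reproducing the stated exceptional case. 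What each approach buys: yours is elementary and self-contained modulo the Lov\'asz form of Kruskal--Katona, and the unimodality picture makes the exceptional equality case transparent; the bipartite-graph method of \cite{KZ} avoids shadow estimates entirely, generalizes more smoothly to multiple families and to the regime where $\bb$ is large (where the shadow bound degrades), and is the one the paper actually relies on. One small caution: your equality clause "forces \ldots $\bar\bb$ being Kruskal--Katona-extremal, so $(\aaa,\bb)\cong(\aaa_j,\bb_j)$" goes beyond what the corollary asserts (it only claims strictness under $|\bb|>\binom{n-j}{b-j}$, not a classification of extremal pairs), and a full stability analysis of the Lov\'asz bound would be needed to justify that stronger statement; it is unnecessary here.
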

Note that the roles of $a,b$ are interchanged, as compared to \cite[Theorem 1]{KZ}. Also, in Corollary~\ref{corkz} we write stronger upper bounds on $|\bb|$ for simplicity. The result in \cite{KZ} did not explicitly treat the equality case. The strictness part of \eqref{eqcreasy} for $b<a$ follows by comparing the bound with the bound in \eqref{eqcreasy2} for the case $j=b$ (then, the condition on $|\bb|$ is $|\bb|\ge 1$). Strictness in \eqref{eqcreasy} for $b\ge a$ follows by comparing the same bounds ($j=b+1$ and $j=b$) from \cite[Theorem 1, part 2]{KZ}. Finally, the equality case in \eqref{eqcreasy2} follows from \cite[Theorem~2.12 part 3]{Kup73}.

The following lemma is crucial for the proof of the second part of the main theorem. It allows us to compare the sizes of different $\ff$ with $\ff(\bar 1)$ being different minimal families. Given integers $m>2s$, let  $\mathcal T_2^s:=\{[s], [s+1,2s]\}$. Let $\ff_2^s\subset {[m]\choose k-1}$ stand for the largest family that is cross-intersecting with $\mathcal T_2^s$. 

\begin{lem}\label{lemmin} Let $k, s$ and $m\ge k+s$ be integers, $k\ge 4$. Given a family $\mathcal H\subset {[m]\choose s}$ with $\tau(\mathcal H)=2$ and minimal w.r.t. this property, consider  the maximal family $\ff\subset {[m]\choose k-1}$ that is cross-intersecting with $\mathcal H$. Then the unique maximum of $|\ff|+|\mathcal H|$ is attained when $\mathcal H$ is isomorphic to $\mathcal T_2^s$ and $\ff$ is  isomorphic to $\ff_2^s$.
\end{lem}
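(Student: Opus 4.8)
The plan is to reformulate the problem, then split into two structural cases.

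\textbf{Reduction.} Since $\ff$ is the maximal $(k-1)$-uniform family cross-intersecting with $\mathcal H$, a set $F\in\binom{[m]}{k-1}$ lies outside $\ff$ exactly when $[m]\setminus F\supseteq H$ for some $H\in\mathcal H$; hence $\binom{[m]}{k-1}\setminus\ff=U(\mathcal H):=\bigcup_{H\in\mathcal H}\binom{[m]\setminus H}{k-1}$ and $|\ff|+|\mathcal H|=\binom{m}{k-1}+\Phi(\mathcal H)$, where $\Phi(\mathcal H):=|\mathcal H|-|U(\mathcal H)|$. Inclusion--exclusion gives $\Phi(\mathcal T_2^s)=2-2\binom{m-s}{k-1}+\binom{m-2s}{k-1}$, so the lemma amounts to $\Phi(\mathcal H)\le\Phi(\mathcal T_2^s)$ for every minimal $\tau=2$ family $\mathcal H\subseteq\binom{[m]}{s}$, with equality only for $\mathcal H\cong\mathcal T_2^s$ (and then $\ff\cong\ff_2^s$, being determined by $\mathcal H$). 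A basic device is the \emph{private block} of a set $H\in\mathcal H$: by minimality there is $z_H$ lying in every member of $\mathcal H\setminus\{H\}$, and $z_H\notin H$ (else $\tau(\mathcal H)=1$); the $\binom{m-s-1}{k-2}$ sets $\{z_H\}\cup F'$ with $F'\in\binom{[m]\setminus H\setminus\{z_H\}}{k-2}$ all lie in $\binom{[m]\setminus H}{k-1}$ but in no other $\binom{[m]\setminus H'}{k-1}$, and blocks of distinct $H$ are pairwise disjoint. Since $m\ge k+s$ and $k\ge4$, $\binom{m-s-1}{k-2}\ge\binom{k-1}{k-2}=k-1\ge3$.

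\textbf{Case A: $\mathcal H$ contains two disjoint sets $H_1,H_2$.} Then $\big|\binom{[m]\setminus H_1}{k-1}\cup\binom{[m]\setminus H_2}{k-1}\big|=2\binom{m-s}{k-1}-\binom{m-2s}{k-1}$, and adjoining the private blocks of the other $r-2$ sets (disjoint from this union and from each other) yields $|U(\mathcal H)|\ge2\binom{m-s}{k-1}-\binom{m-2s}{k-1}+(r-2)\binom{m-s-1}{k-2}$, so $\Phi(\mathcal H)\le\Phi(\mathcal T_2^s)+(r-2)\big(1-\binom{m-s-1}{k-2}\big)\le\Phi(\mathcal T_2^s)$, with equality only when $r=2$, i.e. $\mathcal H$ is a pair of disjoint $s$-sets, isomorphic to $\mathcal T_2^s$.

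\textbf{Case B: $\mathcal H$ is intersecting.} One checks $r:=|\mathcal H|\ge3$ and that the witnesses $z_1,\dots,z_r$ are distinct with $z_i\in H_\ell\iff\ell\neq i$; with $Z=\{z_1,\dots,z_r\}$ this forces $\mathcal H=\{(Z\setminus\{z_\ell\})\cup W_\ell:\ell\in[r]\}$, where $W_\ell\subseteq[m]\setminus Z$, $|W_\ell|=s-r+1$, $\bigcap_\ell W_\ell=\emptyset$ and $3\le r\le s+1$. Sorting $(k-1)$-subsets of $[m]$ by their trace on $Z$ gives $|U(\mathcal H)|=\binom{m-r}{k-1}-T+r\binom{m-s-1}{k-2}$, where $T$ is the number of $(k-1)$-subsets of $[m]\setminus Z$ meeting every $W_\ell$; thus $\Phi(\mathcal H)=r-\binom{m-r}{k-1}+T-r\binom{m-s-1}{k-2}$ is increasing in $T$. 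Since $\bigcap_\ell W_\ell=\emptyset$, either two of the $W_\ell$ are disjoint, whence inclusion--exclusion on that pair gives $T\le\binom{m-r}{k-1}-2\binom{m-s-1}{k-1}+\binom{m+r-2s-2}{k-1}$, or $\{W_\ell\}$ is a nontrivial intersecting family and a comparable bound follows from inclusion--exclusion on three members. Substituting the first bound and using $\binom{m-s}{k-1}=\binom{m-s-1}{k-1}+\binom{m-s-1}{k-2}$, the required strict inequality $\Phi(\mathcal H)<\Phi(\mathcal T_2^s)$ reduces to $(r-2)\big(\binom{m-s-1}{k-2}-1\big)>\binom{m+r-2s-2}{k-1}-\binom{m-2s}{k-1}$; writing the right-hand side as a sum of $r-2$ coefficients $\binom{b}{k-2}$ with $b\le m-s-2$ (here $r\le s+1$ is used), it suffices that $\binom{m-s-1}{k-2}-\binom{m-s-2}{k-2}=\binom{m-s-2}{k-3}\ge2$, which holds since $m-s-2\ge k-2$ and $k\ge4$.

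\textbf{Main obstacle.} The crux is Case B and, within it, the bound on $T$ — equivalently, a lower bound on $|U(\mathcal H)|$ (upper bound on $|\ff|$) strong enough to be useful \emph{uniformly in $m$}. A single complement together with the $r$ disjoint private blocks only gives $|U(\mathcal H)|\ge\binom{m-s-1}{k-1}+r\binom{m-s-1}{k-2}$, which fails for large $m$ when $\mathcal H$ has few sets with large pairwise overlaps; one must extract the extra ``$2\binom{m-s-1}{k-1}$'' from the inclusion--exclusion structure forced by $\bigcap_\ell W_\ell=\emptyset$. The delicate sub-case is when the $W_\ell$ themselves form a nontrivial intersecting family (no two disjoint), where the two-set estimate must be replaced by a three-set one and checked against $\Phi(\mathcal T_2^s)$ directly.
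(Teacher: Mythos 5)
Your reduction to bounding $\Phi(\mathcal H)=|\mathcal H|-|U(\mathcal H)|$ is sound, and the structural description in Case B (writing $H_\ell=(Z\setminus\{z_\ell\})\cup W_\ell$ with $\bigcap_\ell W_\ell=\emptyset$, $|W_\ell|=s-r+1$, $r\le s+1$, and deriving $|U(\mathcal H)|=\binom{m-r}{k-1}-T+r\binom{m-s-1}{k-2}$) is correct. Your Case A reduces to $r=2$ anyway, since minimality forces $z_{H_3}\in H_1\cap H_2$ for any third set. And when two of the $W_\ell$ are disjoint, the pairwise inclusion--exclusion bound on $T$ does yield the required $(r-2)\big(\binom{m-s-1}{k-2}-1\big)>\binom{m+r-2s-2}{k-1}-\binom{m-2s}{k-1}$ as you compute.

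However, the remaining sub-case --- $\{W_\ell\}$ a pairwise-intersecting family with empty total intersection --- is a genuine gap, not a routine step, and you acknowledge as much in your final paragraph. ``Inclusion--exclusion on three members'' is not a proof: you do not even establish that there exist three $W_\ell$ with empty triple intersection (this can fail; e.g.\ $W_\ell=[r]\setminus\{\ell\}$ has all triples meeting when $r\ge4$), and when they all pairwise meet, the three-set Bonferroni lower bound for $|\bigcup_\ell A_\ell|$ involves the pairwise-union sizes $|W_\ell\cup W_{\ell'}|$, which are smaller than $2|W_\ell|$, weakening the estimate precisely where you need strength. Whether the resulting bound on $T$ beats the target $\Phi(\mathcal T_2^s)$ is unverified, and it depends on the overlap pattern in a way your argument leaves unexamined.

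The paper avoids this entirely by not estimating $|U(\mathcal H)|$ via inclusion--exclusion on complements. Instead, it fixes, for each $\ell\ge2$, the witness $i_\ell\in\bigcap_{j\ne\ell}H_j\setminus H_\ell$, and \emph{partitions} $\ff$ by the first index in the ordered list $i_2,\ldots,i_z,j_1,\ldots,j_{s+1-z}$ (where $H_1\setminus\{i_2,\ldots,i_z\}=\{j_1,\ldots,j_{s+1-z}\}$) that a set of $\ff$ contains, using that every $F\in\ff$ meets $H_1$. Each class is bounded by forcing intersection with one further $H_i\setminus\{i_2,\ldots,i_z\}$, a set of size exactly $s-z+2$. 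This yields a single telescoping upper bound $f(z)$ depending only on $z=|\mathcal H|$, with $f(z-1)-f(z)\ge\binom{m-s-2}{k-3}>1$, and hence $|\mathcal H|+|\ff|\le f(2)+2$ with strictness for $z>2$. No case analysis on the intersection pattern of the residual sets is needed. If you want to complete your route, you would have to carry out and verify the three-set (and possibly deeper) Bonferroni estimate against $\Phi(\mathcal T_2^s)$ across all overlap patterns; the paper's layered-counting bound is the device that makes this unnecessary.
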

\begin{proof}
  Let us first express $|\ff_2^s|$. It is not difficult to see that
\begin{footnotesize}\begin{align}
\notag  |\ff_2^s|\  =\ & {m-1\choose k-2}-{m-s-1\choose k-2} + \\
\notag   & {m-2\choose k-2}-{m-s-2\choose k-2}+\\
\notag   &\cdots \\
\label{eqfs}   &{m-s\choose k-2}-{m-2s\choose k-2}.
\end{align}\end{footnotesize}
Indeed, in the first line we count the sets containing $1$ that intersect $[s+1,2s]$, in the second line we count the sets not containing $1$, containing $2$ and intersecting $[s+1,2s]$ etc.

Rather surprisingly, we can bound the size of $\ff$ for any $\mathcal H$ in a similar way. Suppose that $z:=|\mathcal H|$ and $\mathcal H = \{H_1,\ldots, H_z\}$. Since $\mathcal H$ is minimal, for each $l\in[z]$ there exists an element $i_l$ such that $i_l\notin H_l$ and $i_l\in \bigcap_{j\in[z]\setminus \{l\}} H_j$. (All $i_l$ are of course different.) Applying Bollobas' set-pairs inequality \cite{Bol} to $\mathcal H$ and $\{i_{l}\ :\ l\in[z]\}$, we get that $|\mathcal H|\le {s+1\choose s}=s+1$.

For each $l=2,\ldots, z$, we count the sets $F\in\ff$ such that $F\cap \{i_2,\ldots, i_l\}=\{i_l\}$. (Cf. the displayed inequality below.) Such sets must additionally intersect $H_{l}\setminus \{i_2,\ldots, i_{l-1}\}$. Note that $H_1\supset \{i_2,\ldots, i_z\}$. This covers all sets from $\ff$ that intersect $\{i_2,\ldots, i_z\}$ and gives the first $z-1$ lines in the displayed inequality below. Next, we have to deal with sets from $\ff$ that do not intersect $\{i_2,\ldots, i_z\}$. Firstly, they must intersect $H_1$.
Assuming that $H_1\setminus \{i_2,\ldots,i_z\} = \{j_1,\ldots, j_{s+1-z}\}$, for each  $l=1,\ldots, s+1-z$ we further count the sets $F\in \ff$ such that $F\cap \{i_2,\ldots, i_z, j_1,\ldots, j_l\}=\{j_l\}$. Since the intersection of $\mathcal H$ is empty, there must exist $i\ge 2$ such that $j_l\notin H_i$. Thus, these sets must additionally intersect $H_{i}\setminus \{i_2,\ldots, i_{z}\}$. Note that the set $H_{i}\setminus \{i_2,\ldots, i_{z}\}$ has size $s-z+2$. This count gives the last $s+1-z$ lines in the displayed inequality below, in which we upper bound $|H_{i}\setminus \{i_2,\ldots, i_{z}\}|$ by $s-z+2$.
 Since $F\cap H_1\ne \emptyset$ for any $F\in \ff$ and given that the classes for different $l$ are disjoint, we clearly counted each set from $\ff$ exactly once. (However, we may also have counted some sets that are not in $\ff$.) Doing this count, we get the following bound on $\ff$.
\begin{footnotesize}\begin{align}
 \notag |\ff|\ \le \ & {m-1\choose k-2}-{m-s-1\choose k-2} + \\
\notag   & {m-2\choose k-2}-{m-s-1\choose k-2}+\\
\notag   &\cdots\\
\notag   & {m-z+1\choose k-2}-{m-s-1\choose k-2}+\\
\notag   & {m-z\choose k-2}-{m-s-2\choose k-2}+\\
\notag   &\cdots \\
\label{eqfz3}   &{m-s\choose k-2}-{m-2s-2+z\choose k-2}\ =:\ f(z).
\end{align}\end{footnotesize}
Note that \eqref{eqfz3} coincides with \eqref{eqfs} when substituting $z=2$.
We have $f(z-1)-f(z)\ge {m-s-1\choose k-2}-{m-s-2\choose k-2}={m-s-2\choose k-3}> 1$ (here we use that $m\ge s+k$ and $k\ge 4$). Therefore, for any $z\ge z'$, \begin{equation*}\label{eqz'}|\mathcal H|+|\ff|\le f(z')+z',\end{equation*} and the inequality is strict unless $z=|\mathcal H|=z'$.

At the same time, we have $|\ff_2^s|+|\mathcal T_2^s|=f(2)+2$. Since, up to isomorphism, there is only one family $\mathcal H\subset {[m]\choose s}$ of size $2$ with $\tau(\mathcal H)=2$, we immediately conclude that the lemma holds.
\end{proof}

\subsection{Proof of Theorem~\ref{thmclass2}}\label{sec41}
Let us deal with the first part of the statement first. Choose $\ff$, $t$, $\mm$, and $\ff'$ satisfying the requirements of the theorem, and assume that $|\ff|\ge |\ff'|$, and $\ff$ is the largest for a given $\mm$. If $|\ff| = |\ff'|\le |\mathcal H_3| = |\mathcal H_2|$ then we can w.l.o.g. replace $\ff$ by $\mathcal H_3$ (cf. \eqref{eqhu}). Indeed, $|\cap_{H\in\mathcal H_3(\bar 1)}H|=3$ and $\mathcal H_3(\bar 1)$ contains  a copy of any minimal family $\mathcal M$ with $|\cap_{M\in\mathcal M}M|\ge 3$ as in the statement of the theorem.  If $|\ff|\ge |\ff'|\ge|\mathcal H_3|$ then, by Theorem~\ref{thm1}, $\gamma(\ff)\le
{n-4\choose k-3}$. Thus, in what follows,  we may suppose that $\gamma(\ff) = |\ff(\bar 1)|\le {n-4\choose k-3}$. We also suppose that $\bigcap_{M\in \mm}M = [2,t+1]$.

We need to show that $|\ff|\le |\ff'|$ and equality for $k\ge 5$ is only possible if $\ff = \ff'$. To do so, we shall gradually transform $\ff$ into $\ff'$. We will preserve all key properties and, moreover, after each exchange operation, the size of $\ff$ shall strictly increase if $\ff$ changes. In what follows, arguing indirectly, we assume that $\gamma(\ff)>\gamma(\ff')$.

First, for all $i=2,\ldots,t+1$ and $j>i$, we consecutively apply all the $S_{ij}$-shifts to $\ff$. Note that the family $\mm$ stays intact under these shifts, and the diversity and size of $\ff$ is not affected. Thus, we may assume that $\ff$ is invariant under these shifts. For any $j\ge 2$ and $S\subset [2,j]$, define $$\ff(S,[j]):=\big\{F\subset [j+1,n]\ :\  F\cup S\in \ff\big\}$$ and, for any family $\mathcal R\subset {[n]\choose k}$, let $\partial\mathcal R:=\bigcup_{R\in \mathcal R} {R\choose k-1}$ denote the {\it shadow} of $\mathcal R$.

An important consequence of the shifts we made is that, for any $i\in [2,t+1]$ and $S\subset [2,i-1]$, we have \begin{equation}\label{eqcard} \big|\ff(S\cup \{i\},[i])\big|\ge \big|\partial \ff(S,[i])\big|.\end{equation} Actually, we have $\ff(S\cup \{i\},[i])\supset \partial \ff(S,[i])$.  Indeed, for any $F'\in \partial \ff(S,[i])$, there exist $j\in [i+1,n]$ and $F\in \ff(S,[i])$, such that $F'\cup \{j\}=F$, and, since we performed the $(i,j)$--shift, we have $F'\cup \{i\}\cup S\in \ff$. That is, $F'\in \ff(S\cup \{i\},[i])$.

Recall that we have $|\ff(\bar 1)|\le {n-4\choose k-3}$.
\begin{prop} For any $i\in [2,4]$ and with the convention $[2,1]=\emptyset$, we have \begin{equation}\label{eqcard2} |\ff([2,i-1],[i])|\le {n-5\choose k-3}.\end{equation}
\end{prop}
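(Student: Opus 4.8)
The plan is to identify $\ff([2,i-1],[i])$ with a slice of $\ff(\bar 1)$, compress that slice using the shift-invariance of $\ff(\bar 1)$, and then finish with the bound $|\ff(\bar 1)|\le {n-4\choose k-3}$ recalled just above together with the Kruskal--Katona theorem. First I would note that $A\mapsto A\setminus [2,i-1]$ is a bijection between $\mathcal X_i:=\{A\in\ff(\bar 1):\ [2,i-1]\subset A,\ i\notin A\}$ and $\ff([2,i-1],[i])$ (for $i=2$ this is just $\mathcal X_2=\{A\in\ff(\bar 1):\ 2\notin A\}$), so it suffices to bound $|\mathcal X_i|$. It is convenient to pass to traces on $[i+1,n]$: set $\widetilde{\mathcal X}_i:=\{A\setminus [2,i-1]:\ A\in\mathcal X_i\}\subset {[i+1,n]\choose k-i+2}$ and $\widetilde{\mathcal Y}_i:=\{A\setminus [2,i]:\ A\in\ff(\bar 1),\ [2,i]\subset A\}\subset {[i+1,n]\choose k-i+1}$; then $|\widetilde{\mathcal X}_i|=|\mathcal X_i|$, and $\mathcal X_i$ and $\{A\in\ff(\bar 1):\ [2,i]\subset A\}$ are disjoint subfamilies of $\ff(\bar 1)$.

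The core of the argument is an application of the shifts already performed. Since $t\ge 3$, we have $i\in\{2,3,4\}\subset [2,t+1]$, so every shift $S_{ij}$ with $j>i$ was applied, and therefore $\ff$, hence also $\ff(\bar 1)$ (as $1\notin\{i,j\}$), is invariant under each of these shifts. Given $A\in\mathcal X_i$ and any $j\in A$ with $j>i$, we have $i\notin A\ni j$, so $S_{ij}(A)=(A\setminus\{j\})\cup\{i\}\in\ff(\bar 1)$, and this set contains $[2,i]$; reading this on the coordinates $[i+1,n]$ says precisely that the lower shadow satisfies $\partial\widetilde{\mathcal X}_i\subset\widetilde{\mathcal Y}_i$. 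Hence
$$|\widetilde{\mathcal X}_i|+|\partial\widetilde{\mathcal X}_i|\ \le\ |\mathcal X_i|+\big|\{A\in\ff(\bar 1):\ [2,i]\subset A\}\big|\ \le\ |\ff(\bar 1)|\ \le\ {n-4\choose k-3}.$$

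To conclude I would apply Kruskal--Katona. We may assume $\widetilde{\mathcal X}_i\ne\emptyset$; write $r:=k-i+2$ and $|\widetilde{\mathcal X}_i|={x\choose r}$ with real $x\ge r$. By the Lov\'asz form of Kruskal--Katona, $|\partial\widetilde{\mathcal X}_i|\ge{x\choose r-1}$, so ${x+1\choose r}={x\choose r}+{x\choose r-1}\le{n-4\choose k-3}$. Now $k-3<r\le k\le n-k-1=(n-4)-(k-3)$ — this is the one place where $n>2k$ is used — so unimodality of the binomial coefficients gives ${n-4\choose r}\ge{n-4\choose k-3}$, whence ${x+1\choose r}\le{n-4\choose r}$ forces $x+1\le n-4$. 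Combined with $r\ge k-3$ this yields $\frac{x+1-r}{x+1}=1-\frac{r}{x+1}\le 1-\frac{k-3}{n-4}=\frac{n-k-1}{n-4}$, and therefore
$$|\ff([2,i-1],[i])|={x\choose r}={x+1\choose r}\cdot\frac{x+1-r}{x+1}\ \le\ {n-4\choose k-3}\cdot\frac{n-k-1}{n-4}\ =\ {n-5\choose k-3},$$
which is \eqref{eqcard2}. The step I expect to be most delicate is the shadow bookkeeping — verifying $\partial\widetilde{\mathcal X}_i\subset\widetilde{\mathcal Y}_i$ and checking that the three quantities in the first displayed chain really correspond to pairwise disjoint parts of $\ff(\bar 1)$ — while the closing binomial manipulation is routine, apart from a little care when $n$ is close to $2k$.
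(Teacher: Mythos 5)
Your proof is correct and follows essentially the same route as the paper's: both rest on the shift-invariance of $\ff(\bar 1)$ to get the shadow containment (the paper's \eqref{eqcard} with $S=[2,i-1]$), the disjointness that makes $|\widetilde{\mathcal X}_i|+|\partial\widetilde{\mathcal X}_i|\le|\ff(\bar 1)|\le{n-4\choose k-3}$, and the Lov\'asz form of Kruskal--Katona. The only differences are presentational: you argue directly and use Pascal's rule to package $|\widetilde{\mathcal X}_i|+|\partial\widetilde{\mathcal X}_i|$ as ${x+1\choose r}$, while the paper argues by contradiction with a subfamily of size exactly ${n-5\choose k-3}$; the binomial arithmetic that closes the argument is the same in substance.
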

\begin{proof} Assume that \eqref{eqcard2} does not hold. Let us show that
$$|\partial \ff([2,i-1],[i])|> {n-5\choose k-4}.$$
Recall that the Kruskal--Katona theorem in Lovasz' form \cite{Lov} states that if $\mathcal K$ is a family of $m$-element sets and, for some real $x\ge m$, we have $|\mathcal K|\ge {x\choose m}$, then $|\partial \mathcal K|\ge {x\choose m-1}$. In our case, $\mathcal K:=\ff([2,i-1],[i])\subset {[i+1,n]\choose k-i+2}$ and, by assumption, $|\mathcal K|>{n-5\choose k-3}$. Select $\mathcal K'\subset \mathcal K$ such that $|\mathcal K'| = {n-5\choose k-3}$. Define $x$ by the following equality: ${x\choose k-i+2}={n-5\choose k-3}$. Let us upper bound the value of $x$. We claim that $x\le n-4$ for any $i=2,3,4$. First, ${n-4\choose k-i+2}\ge {n-4\choose k-3}>{n-5\choose k-3}$ for any $i=2,3,4$, where the first inequality holds since $n\ge 2k+1$. Second, ${y\choose k-i+2}$ is an increasing function of $y$, and thus the value of $x$ defined above is at most $n-4$.

By the Kruskal--Katona theorem in Lovasz' form, we have $|\partial \ff([2,i-1],[i])|\ge |\partial \mathcal K'|\ge {x\choose k-i+1}$.
We have
\begin{align*}{x\choose k-i+1}&=\frac{k-i+2}{x+1-(k-i+2)}{x\choose k-i+2} = \frac{k-i+2}{x+1-(k-i+2)}{n-5\choose k-3}\\
&\overset{(\text{since }i\le 4)}{\ge} \frac{k-2}{x+1-(k-2)}{n-5\choose k-3}\overset{(\text{since } x\le n-4)}{\ge} \frac{k-2}{n-3-(k-2)}{n-5\choose k-3}\\
&>\frac{k-3}{n-k-1}{n-5\choose k-3}= {n-5\choose k-4}.\end{align*}
From the computation above, we have $|\partial \ff([2,i-1],[i])|> {n-5\choose k-4}$, and thus
$$\big|\ff(\bar 1)\big|\ge \big|\ff([2,i-1],[i])\big|+\big|\ff([2,i],[i])\big|\overset{\eqref{eqcard}}{\ge} \big|\ff([2,i-1],[i])\big|+\big|\partial \ff([2,i-1],[i])\big|> {n-4\choose k-3},$$
a contradiction.
\end{proof}
For each $i=2,\ldots, k$, consider the following bipartite graph $G_i$. The parts of $G_i$ are
\begin{align*}
\mathcal P_a^i:=\ &\Big\{P\ :\  P\in {[2,n]\choose k-1},\ P\cap [2,i]=\{i\}\Big\},\\
\mathcal P_b^i:=\ &\Big\{P\ :\  P\in {[2,n]\choose k},\ P\cap [2,i]=[2,i-1]\Big\},
\end{align*}
and edges connect disjoint sets. We identify $\mathcal P_a^i$ with ${[i+1,n]\choose k-2}$ and $\mathcal P_b^i$ with ${[i+1,n]\choose k-i+2}$.

For each $i=2,\ldots, t+1$ we will apply \eqref{eqcreasy} to
\begin{align*}
\aaa_i:=\ff(1)\cap \mathcal P_a^i \ \ \ \ \text{and}\ \ \ \ \
\bb_i:=\ff(\bar 1)\cap \mathcal P_b^i,
\end{align*}
and conclude that $|\aaa_i|+|\bb_i|\le {n-i\choose k-2}$, and that the inequality is strict unless $\bb_i=\emptyset$. Put $X = [i+1,n]$. In order to apply \eqref{eqcreasy}, we need, first, $n-i = |X|>k-2+k-i+2 = 2k-i$ (this holds since $n>2k$). Most importantly, we need $|\bb_i|\le {|X|-(k-i+2)-1+(k-2)\choose k-3} = {n-5\choose k-3}$ for  $i\in [2,4]$. This holds because of \eqref{eqcard2}. For $i\ge 5$ the uniformity of $\bb_i$ (thought of as a subset in ${[i+1,n]\choose k-i+2}$ is smaller than the uniformity of $\aaa_i$. 
Thus,  we may replace $\aaa_i,\ \bb_i$ with $\bb'_i:=\emptyset$ and $\aaa'_i:={X\choose k-2}$. The resulting family $\ff'_i$ is cross-intersecting since all sets in $\ff'_i(\bar 1)$ contain $[2,i]$ and sets in $\ff'_i(1)\setminus \ff(1)$ intersect $[2,i]$. Moreover, if $|\bb_i|\ne 0$ then $|\ff'_i|>|\ff|$.  Thus, by maximality of $|\ff|$ we may assume that all sets in $\ff(\bar 1)$ must contain $[2,i]$. Applying this argument for each $i=2,\ldots, t+1$, we conclude that all sets in $\ff(\bar 1)$ must contain $[2,i]$.

Put $\mm = \{M_1,\ldots, M_z\}$. Since $\mm$ is minimal, for each $M_l\in \mm$, $l\in [z]$, there is \begin{equation}\label{eqil} j_l\in \Big(\bigcap_{M\in \mm\setminus \{M_l\}}M\Big)\setminus \Big(\bigcap_{M\in \mm}M\Big).\end{equation}
We assume that $j_l = t+l+1$, $l\in[z]$. In particular, $\{j_1,\ldots, j_z\} = [t+2,t+z+1]$.

Next, for $i=t+2,\ldots, t+z+1$ we shall show by induction that all sets in $\ff(\bar 1)\setminus \{M_{i-t-1}\}$ contain $[2,i]$. This is true for $i=t+1$. Fix some $i$ and consider the bipartite graph $G_i$, defined above. We can apply \eqref{eqcreasy2} with $n-i,k-2,k-i+2, k-i+2$ playing roles of $n,a,b,j$, respectively. First, note that $b<a$. Second, we know that $|\bb_i|\ge {n-i\choose (k-i+2)-(k-i+2)}=1$ since $M_{i-t-1}\in \mathcal P_b^i$ (note that $M_l\notin \mathcal P_b^i$ for $l\ne i-t-1$, since all of them contain $i_l$ due to the definition of $i_l$). Therefore, $|\mathcal A_i|+|\mathcal B_i|\le {n-i\choose k-2}-{n-k-i\choose k-2}+1$. For $k\ge 5$, the inequality is strict unless $\bb_i=\{M_{i-t-1}\}$.  Replace $\aaa_i,\ \bb_i$ with $\bb'_i:=\{M_{i-t-1}\}$ and $\aaa'_i:=\{A\in {[i+1,n]\choose k-2}\ :\  A\cap M_{i-t-1}\ne \emptyset\}$. Let us denote the resulting family $\ff'_i$.

Let us briefly discuss the case $k=4$. If $k=4$ then each set in $\ff([2,4],[4])$ is a singleton, so $|\mathcal M|=2$ and we apply the argument above for $i=5$ only. Thus $n-5,2,1,1$ play the roles of $n,a,b,j$, respectively, and so the inequality is strict unless $|\bb_i| = {n-5\choose 1}$, i.e., $\bb_i$ contains all possible $1$-element sets. This means that $\ff$ is (a subfamily of) the exceptional family $\mathcal E_{n-k}$ for $k=4$. If $|\bb_i|<{n-5\choose 1},$ then after the replacement we will have $|\ff(\bar 1)|=2$ (i.e., $\ff$ will turn out to be isomorphic to a subfamily of $\mathcal J_{2}$).

Let us return to the analysis. The family $\ff'_i$ is cross-intersecting. Indeed, any set in $\ff'_i(1)\setminus \ff(1)$ contains $i$ and intersects $M_{i-t-1}$. At the same time, all sets in $\ff(\bar 1)\setminus \mathcal M$ contain $[2,i-1]$ by induction, and those of $\ff(\bar 1)\setminus\mathcal M$ that do not contain $i$ belong to $\mathcal P_b^i$. But they were removed after the replacement. Therefore, the only set not containing $i$ in $\ff'_i(\bar 1)$ is $M_{i-t-1}$, which shows the cross-intersecting property.  Since $|\ff|$ is maximal, we must have $\ff = \ff'_i$. Repeating this argument for  $i=t+2,t+3,\ldots, t+z+1$, we may assume that any set in $\ff(\bar 1)\setminus \mm$ must contain the set $[2, t+z+1]$.

Put $t':=t+z+1$. If $\ff(\bar 1)\setminus \mm$ is non-empty then, for each $l \in [z]$, the set $M_l\setminus [2,t']$  must be non-empty: otherwise, $t'-1>k$. If $\ff(\bar 1)\setminus \mm$ is empty then $\ff(\bar 1)=\mm$, which contradicts $\gamma(\ff)>\gamma(\ff')$. 
For each $l\in [z]$, select
one element $i_l\in M_l\cap [t'+1,n]$. Note that $i_l$ may coincide. Put $I:=\{i_l\ :\ l\in [z]\}$. Consider the bipartite graph $G(t',I)$ with parts
\begin{align*}
\mathcal P_a(t',I):=\ &\Big\{P \ :\ P\in {[2,n]\choose k-1},\ I\subset P,\  [2,t']\cap P=\emptyset\Big\},\\
\mathcal P_b(t',I):=\ &\Big\{P\ :\ P\in {[2,n]\choose k},\ [2,t']\subset P,\ I\cap P=\emptyset\Big\},
\end{align*}
and edges connecting disjoint sets. Put $Y = [t'+1,n]\setminus I$, $|Y| = n-t'-|I|$.  We identify $\mathcal P_a(t',I)$ with ${Y\choose k-|I|-1}$, $|I|+1\le z+1$, and $\mathcal P_b(t',I)$ with ${Y\choose k-t'+1}$, where $t'-1\ge t+z-1\ge z+3$. We note $|Y|=n-t'-|I|>k-|I|-1+k-t'+1$.  By the choice of $I$, we have $|\mm\cap \mathcal P_b(t',I)| =\emptyset$. 
Let
\begin{align*}
\aaa:=\ff(1)\cap \mathcal P_a(t',I) \ \ \ \ \text{and}\ \ \ \ \
\bb:=\ff(\bar 1)\cap \mathcal P_b(t',I).
\end{align*}
We have $k-|I|-1>k-t'+1$, and, therefore, we may apply  \eqref{eqcreasy} with $a:=k-|I|-1,\ b:=k-t'+1$. We have $b<a$  in this case. We conclude that $|\aaa|+|\bb|\le {|Y|\choose k-|I|-1}$, and that the inequality is strict unless $\bb=\emptyset$.
As before, replacing $\aaa$ with $\mathcal P_a(t',I)$ and $\bb$ with $\emptyset$ does not decrease the sum of sizes of the families and preserves the cross-intersecting property. Thus, by the choice of $\ff$, we must have $\bb=\emptyset$.

Repeating this for all possible choices of $I$, we arrive at the situation when any set from $\ff(\bar 1)\setminus\mm$ must intersect {\it any} such set $I$. Clearly, this is only possible for a set $F$ if $F\supset M_l\cap[t'+1,n]$ for some $l$. But then $|F|>|M_l|$, which is impossible. Thus $\ff(\bar 1) = \mm$, which contradicts our assumption $\gamma(\ff)>\gamma(\ff')$, so the proof of \eqref{eqclass1} with uniqueness is complete. Indeed, uniqueness follows from the fact that the inequalities \eqref{eqcreasy}, \eqref{eqcreasy2} are strict unless the family $\bb$ has sizes $0$ and $1$, respectively, or $k=4$ (we mentioned it at every application, and treated the case $k=4$ separately above). \\

Let us now prove the moreover part of the statement of Theorem~\ref{thmclass2}. First, if there is no family $\mathcal M\subset \ff(\bar 1)$, minimal w.r.t. common intersection and such that $|\bigcap_{M\in\mm}M|=t$, then we apply $(i,j)$-shifts to $\ff$ for $2\le i<j\le n$ until such family appears. Since common intersection of any subfamily of $\ff(\bar 1)$ may change by at most one after any shift, either we obtain the desired $\mm$ or we arrive at a shifted family $\ff(\bar 1)$ without such $\mm$. But the latter is impossible. Indeed, for a shifted intersecting family $\ff(\bar 1)$, we have $|\bigcap_{F\in \ff(\bar 1)}F|=[2,j]$ for some $j\ge 2$, and in that case $[2,j]\cup [j+2,k+2]\in \ff(\bar 1)$. But then the set $F_{j'}:=[2,j']\cup [j'+2,k+2]\in\ff(\bar 1)$, where $j\le j'\le k+1$. It is clear that the sets $F_j,\ldots, F_{k-t+j+1}$ form a (minimal) subfamily that has common intersection of size $t$.

Fix $\mathcal M\subset \ff(\bar 1)$, minimal w.r.t. common intersection, such that $|\bigcap_{M\in\mm}M|=t$. Applying the first part of Theorem~\ref{thmclass2}, we may assume that $\ff(\bar 1)=\mm$. Clearly,  the number of sets in $\ff(1)$ passing through $\bigcap_{M\in\mm}M$ is always the same, independently of the form of $\mm$. Thus, we need to analyze the sum of sizes of the family $\ff':=\big\{F\in \ff(1)\ :\  F\cap \bigcap_{M\in\mm}M=\emptyset\big\}$ and $\mm':= \big\{M\setminus \bigcap_{M\in\mm}M\ :\  M\in \mm\big\}.$ Note that $\mm'$ and $\ff'$ are cross-intersecting, moreover, $\tau(\mm')=2$ and $\mm'$ is minimal w.r.t. this property.

Now we can apply Lemma~\ref{lemmin} with $[2,n]\setminus \bigcap_{M\in\mm}M$ playing the role of $[m]$ and $k-t$ playing the role of $s$. Note that $|[2,n]\setminus \bigcap_{M\in\mm}M| = n-t-1\ge 2k-t$, and so the condition on $m$ from the lemma is satisfied. This proves \eqref{eqclass2}, moreover, we get that for $k\ge 5$ the inequality was strict unless $|\ff(\bar 1)|=2$ in the first place. But if $|\ff(\bar 1)|=2$ then $\ff$ is isomorphic to a subfamily of $\mathcal J_{i}$ for $i\ge k-t+1$, and the equality is possible only if $\ff$ is isomorphic to $\mathcal J_i$. Finally, we have seen in the introduction that if $n\ge 3k-t$ then $|\mathcal J_{k-t+1}|>|\mathcal J_i|$ for $i>k-t+1$.
This concludes the proof of Theorem~\ref{thmclass2}.

\section{Acknowledgements} We thank the anonymous referees for carefully reading the manuscript and pointing out numerous problems with the exposition. This research was supported by the grant of the Russian Science Foundation (RScF) No. 24-71-10021.


\begin{thebibliography}{111}

\bibitem{Bol} B. Bollob\'as, {\it On generalized graphs}, Acta Math. Acad. Sci. Hungar 16 (1965), 447--452.

\bibitem{EKR} P. Erd\H os, C. Ko, R. Rado, \textit{Intersection theorems for systems of finite sets}, The Quarterly Journal of Mathematics, 12 (1961), N1, 313--320.

\bibitem{Fra1} P. Frankl,  \textit{Erd\H os--Ko--Rado theorem with conditions on the maximal degree}, J. Comb. Theory Ser. A 46 (1987), N2, 252--263.

\bibitem{F3} P. Frankl, \textit{The shifting technique in extremal set theory}, Surveys in combinatorics, Lond. Math. Soc. Lecture Note Ser. 123 (1987), 81--110, Cambridge University
Press, Cambridge.

\bibitem{FK1} P. Frankl, A. Kupavskii, {\it Erd\H os--Ko--Rado theorem for $\{0, \pm 1\}$-vectors}, J. Comb. Theory Ser. A 155 (2018), 157--179.

\bibitem{Gold} J.L. Goldwasser, {\it Erd\H os--Ko--Rado with conditions on the minimum complementary degree}, J. Comb. Theory Ser. A 109 (2005), 45--62.

\bibitem{HK} J. Han, Y. Kohayakawa, {\it The maximum size of a non-trivial intersecting uniform family that is not a subfamily of the Hilton--Milner family}, Proc. Amer. Math. Soc. 145 (2017) N1, 73--87.

\bibitem{HM} A.J.W. Hilton, E.C. Milner, \textit{Some intersection theorems for systems of finite sets}, Quart. J. Math. Oxford 18 (1967), 369--384.

\bibitem{HP} Y. Huang, Y. Peng, {\it Stability of intersecting families}, European J. Combin. 115 (2024), Paper No. 103774.

\bibitem{KostM}A. Kostochka, Dhruv Mubayi, {\it The structure of large intersecting families} Proc. Amer. Math. Soc.  145 (2017), N6,  2311--2321.

\bibitem{Kup73} A. Kupavskii, {\it Structure and properties of large intersecting families}, unpublished, arXiv:1810.00920


\bibitem{KZ} A. Kupavskii, D. Zakharov, {\it Regular bipartite graphs and intersecting families}, J. Comb. Theory Ser. A 155 (2018), 180--189.

\bibitem{Lov} L. Lov\'asz, {\it Combinatorial Problems and Exercises}, 13.31, North-Holland, Amsterdam (1979).

\bibitem{WLF} Y. Wu, Y. Li, L. Feng, J. Liu, and G. Yu, {\it Stabilities of intersecting families revisited}, arXiv:2411.03674
\end{thebibliography}
\end{document}